\numberwithin{equation}{section}
\def\Xint#1{\mathchoice
	{\XXint\displaystyle\textstyle{#1}}%
	{\XXint\textstyle\scriptstyle{#1}}%
	{\XXint\scriptstyle\scriptscriptstyle{#1}}%
	{\XXint\scriptscriptstyle\scriptscriptstyle{#1}}%
	\!\int}
\def\XXint#1#2#3{{\setbox0=\hbox{$#1{#2#3}{\int}$}
		\vcenter{\hbox{$#2#3$}}\kern-.5\wd0}}
\newcommand{\R}{{\mathbb R}}
\newcommand{\ep}{\varepsilon}
\newcommand{\lt}{\left}
\newcommand{\rt}{\right}
\newcommand{\na}{\nabla}
\newcommand{\nn}{\nonumber}
\newcommand{\e}{\varepsilon}
\newcommand{\ti}{\tilde}
\newcommand{\BI}{\mathcal{B}}
\DeclareMathOperator{\dv}{div}
\DeclareMathOperator{\supp}{supp}
\DeclareMathOperator{\dist}{dist}
\DeclareMathOperator{\card}{card}
\newtheorem{thm}{Theorem}
\numberwithin{thm}{section}
\newtheorem{prop}[thm]{Proposition}
\newtheorem{lem}[thm]{Lemma}
\theoremstyle{definition}
\newtheorem{rem}[thm]{Remark}
\title{Another regularizing property of the 2D eikonal equation}
\date{}
\author{Xavier Lamy\footnote{Institut de Math\'ematiques de Toulouse, UMR 5219, Universit\'e de Toulouse, CNRS, UPS
		IMT, F-31062 Toulouse Cedex 9, France. Email: Xavier.Lamy@math.univ-toulouse.fr}
	\and Andrew Lorent\footnote{Department of Mathematical Sciences, University of Cincinnati, Cincinnati, OH 45221, USA. Email: lorentaw@uc.edu} 
	\and Guanying Peng\footnote{Department of Mathematical Sciences, Worcester Polytechnic Institute, Worcester, MA 01609, USA. Email: gpeng@wpi.edu}}
\begin{document}

\maketitle

\begin{abstract}
A weak solution of the two-dimensional eikonal equation amounts to a vector field $m\colon\Omega\subset\mathbb R^2\to\mathbb R^2$ such that $|m|=1$ a.e. and $\mathrm{div}\,m=0$ in $\mathcal D'(\Omega)$.
It is known that, if $m$  has some low regularity, e.g., continuous or $W^{1/3,3}$, then $m$ is automatically more regular: locally Lipschitz outside a locally finite set. 
A long-standing conjecture by Aviles and Giga, 
if true, would imply the same regularizing effect  under the Besov regularity assumption $m\in B^{1/3}_{p,\infty}$
for $p>3$.
In this note we establish that regularizing effect in the borderline case $p=6$, above which the Besov regularity assumption implies continuity. 
If the domain is a disk and $m$ satisfies tangent boundary conditions,
we also prove this for $p$ slightly below $6$.
\end{abstract}

\section{Introduction}

Let $\Omega\subset\R^2$ an open set and $m\colon \Omega\to\R^2$
a weak solution of the eikonal equation
\begin{align}\label{eq:eik}
|m|=1\quad\text{a.e. in }\Omega,
\qquad 
\dv m=0\quad\text{ in }\mathcal D'(\Omega)\,.
\end{align}
In a simply connected domain, this is  equivalent to the usual eikonal equation $|\nabla u|=1$ for $u\colon\Omega\to\R$ such
that $im=\nabla u$, where $i$ denotes rotation by $\pi/2$.

We are interested in  regularizing features of the eikonal equation \eqref{eq:eik},
of the form: 
if a weak solution $m$ has a given low regularity, then $m$ is locally Lipschitz outside a locally finite set. 
The latter property
corresponds to being a 
 \emph{zero-energy state} of the Aviles-Giga energy, as defined and characterized in \cite{JOP02}.
We are aware of two instances  of this regularizing effect:
 \begin{itemize}
 \item
 If $m$ is continuous, then $m$ is locally Lipschitz. This follows e.g. from \cite{dafermos06} or \cite[Lemma~2.2]{CC10}, see also \cite{ignat24short}.
 \item
 If $m$ is $W^{1/3,3}$, then $m$ is a zero-energy state \cite{DLI15}. See also \cite{ignat12,ignat12survey} for the $W^{1/2,2}$ case.
 \end{itemize}
The results in \cite{LP23facto} make it natural to conjecture 
another instance of this regularizing effect:
\begin{align}\label{eq:conj}
\left.
\begin{aligned}
m\text{ solves \eqref{eq:eik} } 
\\
m\in B^{1/3}_{p,\infty}(\Omega)\text{ for some }p>3
\end{aligned}
\right\rbrace
\quad\Rightarrow
\quad
m\text{ is a zero-energy state}\,,
\end{align}
where the Besov regularity $m\in B^{1/3}_{p,\infty}$ is defined by
\begin{align*}
m\in B^{1/3}_{p,\infty}(\Omega)
\quad\Leftrightarrow
\quad
\sup_{|h|>0} \frac{1}{|h|^{1/3}}\|m(\cdot +h)-m\|_{L^p(\Omega\cap(\Omega-h))} <\infty\,.
\end{align*} 
More precisely, the validity of the conjecture \eqref{eq:conj} is a necessary condition for the validity of the Aviles-Giga conjecture \cite[p.9]{AG87}, see the discussion in \cite[\S~1.2]{LP23facto}.
The range $p>3$ in \eqref{eq:conj} is sharp, since pure jump solutions $m=m^+\mathbf 1_{x_1>0} +m^-\mathbf 1_{x_1<0}$ 
(with $m^\pm \in\mathbb S^1$ s.t. 
$m^+_1 =m^-_1$)
 belong to the space $B^{1/3}_{3,\infty}$,
which plays a critical role in the Aviles-Giga conjecture \cite{GL20}.
 

For $p>6$,  $B^{1/3}_{p,\infty}$ regularity implies continuity \cite[\S~2.7.1]{triebel83}, 
so  \eqref{eq:conj} is true.
For $3<p\leq 6$,
this does not follow directly from the 
already known regularizing effects,
since
$B^{1/3}_{p,\infty}$ regularity does not imply continuity,
nor $W^{1/3,3}$ regularity.\footnote{
An example showing that $B^{1/3}_{p,\infty}\not\subset W^{1/3,3}= B^{1/3}_{3,3}$ in any bounded domain can be constructed using a wavelet basis, as recalled e.g. in \cite[Corollary~2.17(ii)]{CLY24}.}
In this note we present a short argument solving the borderline case $p=6$.

\begin{thm}
\label{T1}
Let $\Omega\subset \mathbb{R}^2$ be an open set,
and $m\colon\Omega\to\R^2$ a weak solution of the eikonal equation \eqref{eq:eik}.
If $m\in B^{1/3}_{6,\infty}(\Omega)$,
then $m$ is locally Lipschitz outside a locally finite set.
\end{thm}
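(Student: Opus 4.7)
The first observation is that, since $|m|=1$, H\"older's inequality on any relatively compact $\Omega'\Subset\Omega$ gives
\[
\|m(\cdot+h)-m\|_{L^3(\Omega')}^{3}\le|\Omega'|^{1/2}\,\|m(\cdot+h)-m\|_{L^6(\Omega')}^{3}\le C|h|,
\]
so the hypothesis $m\in B^{1/3}_{6,\infty}(\Omega)$ already forces $m\in B^{1/3}_{3,\infty}$ locally. This is tantalizingly close to the $W^{1/3,3}=B^{1/3}_{3,3}$ threshold of \cite{DLI15}, but falls short because of the third Besov index ($q=\infty$ versus $q=3$); since pure jumps lie in $B^{1/3}_{3,\infty}\setminus B^{1/3}_{3,3}$, the gap cannot be closed by any soft embedding, and the eikonal equation must be used.

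My plan is to emulate the strategy of \cite{DLI15} at this critical level: show that every (cubic) Aviles-Giga entropy $\Sigma$ satisfies $\dv\Sigma(m)=0$ in $\mathcal D'(\Omega)$, and then invoke the kinetic characterization of zero-energy states of \cite{JOP02}. Concretely, I would mollify $m_\varepsilon=m\ast\rho_\varepsilon$ and estimate $\dv\Sigma(m_\varepsilon)=D\Sigma(m_\varepsilon):\nabla m_\varepsilon$ using that $\dv m_\varepsilon=0$ holds exactly while $|m_\varepsilon|\to 1$. A Constantin-E-Titi-type cubic commutator represents $\dv\Sigma(m_\varepsilon)$ as a trilinear form in the increments $\delta_h m=m(\cdot+h)-m$ with $|h|\lesssim\varepsilon$, whose $L^2$ norm is schematically controlled by $\varepsilon^{-1}\|\delta_h m\|_{L^6}^{3}\lesssim 1$ at the critical scaling $p=6$. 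The off-shell correction $|m_\varepsilon|\neq 1$ is tamed by the variance identity
\[
1-|m_\varepsilon(x)|^2=\tfrac12\iint\rho_\varepsilon(x-y)\,\rho_\varepsilon(x-z)\,|m(y)-m(z)|^2\,dy\,dz,
\]
which under $m\in B^{1/3}_{6,\infty}$ gives $\||m_\varepsilon|^2-1\|_{L^3}=O(\varepsilon^{2/3})\to 0$.

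The main obstacle is that this cubic commutator estimate is \emph{critical}: it delivers an $L^2$ bound of order $1$ rather than $o(1)$ as $\varepsilon\to 0$. To extract the missing vanishing, I would refine the H\"older step via a frequency decomposition of $\delta_h m$, exploiting both the $L^6$ decay at high frequencies and the additional rigidity forced on low frequencies by the rank-one structure $Dm=\alpha\,m\otimes m^\perp$ (a consequence of $|m|=1$ and $\dv m=0$). An alternative worth attempting in parallel is to prove continuity of $m$ directly, after which \cite{dafermos06,CC10,ignat24short} immediately concludes: the embedding $B^{1/3}_{6,\infty}\hookrightarrow C^0$ just fails because the third Besov index is $\infty$, and a Morrey-Campanato-type oscillation bound exploiting the rank-one constraint might upgrade the borderline Besov regularity to $C^0$.
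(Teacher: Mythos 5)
Your proposal does not close the gap, and I want to flag two concrete obstructions before describing what the paper actually does.

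First, the ``alternative worth attempting'' of proving continuity directly cannot succeed. The vortex $m(x)=ix/|x|$ solves the eikonal equation on any disk, is discontinuous at the origin, and lies in $B^{1/3}_{6,\infty}$: since $|m(x+h)-m(x)|\lesssim\min(1,|h|/|x|)$, one computes $\int|m(x+h)-m(x)|^6\,dx\lesssim|h|^2$, so $\|m(\cdot+h)-m\|_{L^6}\lesssim|h|^{1/3}$. Thus $p=6$ is genuinely below the continuity threshold, not a borderline case one can hope to rescue by a Morrey--Campanato argument; any argument must accommodate exactly this kind of isolated singularity.

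Second, your main line is the Constantin--E--Titi commutator estimate for $\dv\Sigma(m_\e)$, which you correctly identify as delivering only an $O(1)$ bound at $p=6$ rather than $o(1)$. The proposed repair (a frequency decomposition combining $L^6$ decay at high frequencies with rank-one rigidity at low frequencies) is not substantiated, and I do not see how it could convert $O(1)$ into $o(1)$: the rank-one structure $Dm=\alpha\,m\otimes m^\perp$ is exactly what produces pure jumps, which saturate the $B^{1/3}_{3,\infty}$ scaling, and the $L^6$ information you have is dilation-critical. Without a new quantitative input, the limit of $\dv\Sigma(m_\e)$ could a priori be a nonzero measure.

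The paper's route is genuinely different and avoids both obstructions. It does not try to prove $\dv\Phi(m)=0$ by a commutator limit. Instead it uses that $m\in B^{1/3}_{6,\infty}$ forces the entropy productions to lie in $L^2$ (via the kinetic measure $\sigma$ and the associated $\nu\in L^2$), and then studies integral curves of $\nabla u_\e$ where $im=\nabla u$. Proposition~\ref{p:rigid_B3pconvol} shows quantitatively that any integral curve along which $|m_\e|$ stays bounded away from zero must be nearly a straight segment on which $u$ is affine with slope $1$; this yields local Lipschitz regularity there. The $B^{1/3}_{6,\infty}$ hypothesis then enters a second time, in Lemma~\ref{l:badconvolB6}, through the Vitali-covering bound $\card(X^U_\e)\lesssim\|m\|_{B^{1/3}_{6,\infty}}^6\,\e^{6\cdot\frac13-2}=O(1)$: the exponent $sq-2$ vanishes exactly at $(s,q)=(1/3,6)$, so the number of $\e$-balls on which $|m_\e|$ can dip below $1/2$ is bounded independently of $\e$, hence the set of ``bad'' centers is finite in each $U\Subset\Omega$. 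Combining the two, $\dv\Phi(m)=0$ a.e., hence (as $\dv\Phi(m)\in L^2$) everywhere, and \cite[Theorem~1.3]{JOP02} concludes. The two uses of the exponent $6$ — making $\nu\in L^2$, and making the bad set finite — are the structural ingredients missing from your plan.
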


We rely on the characterization of zero-energy solutions of \eqref{eq:eik}
established in \cite{JOP02}: a weak solution $m$ is locally Lipschitz outside a locally finite set if
\begin{align}\label{eq:zero_ent}
&
\dv\Phi(m)=0\quad\text{in }\mathcal D'(\Omega)\,,\qquad\forall \Phi\in\mathrm{ENT}\,,
\\
&
\mathrm{ENT}=\left\lbrace
\Phi\in C^2(\mathbb S^1;\R^2)\colon \frac{d}{d\theta}\Phi(e^{i\theta}) \cdot e^{i\theta}=0\quad\forall\theta\in\R
\right\rbrace\,.
\nonumber
\end{align}
Maps in $\mathrm{ENT}$ are  called entropies, 
and for a weak solution $m$ the distributions $\dv\Phi(m)$ are the corresponding entropy productions.
Entropies are characterized by the fact that all smooth solutions of the eikonal equation have zero entropy production \eqref{eq:zero_ent}.
They were introduced in \cite{DKMO01} to study compactness properties of sequences with bounded Aviles-Giga energy.

\begin{rem}
In \cite[Theorem~1.4]{LP23facto} it is shown that, for a
unit divergence-free vector field $m$ as in \eqref{eq:eik}
and $3<p\leq 4$,
local  $B^{1/3}_{p,\infty}$ regularity is equivalent to local $L^{\frac p 3}$ integrability of all entropy productions $\dv\Phi(m)$.
That equivalence could not be true for $p>6$,
since in that case $B^{1/3}_{p,\infty}$ maps are continuous, 
but the discontinuous map  $m(x)=ix/|x|$ has zero entropy productions.
There is however no direct obstruction to it being true for $p=6$.
In that case, Theorem~\ref{T1} would imply that any solution of \eqref{eq:eik} with $L^2$ entropy productions is a zero-energy state, thus partly answering \cite[Conjecture~1.5]{LP23facto}.
\end{rem}

We let $m_\e$ denote the convolution 
 $m_\e =m*\rho_\e$ where $\rho_\e(z)=\e^{-2}\rho(z/\e)$  and 
 $\rho$ is a fixed kernel supported in $B_1$
 with
 $\int_{B_1}\rho = 1$, $0\leq\rho\leq 1 $ and $|\nabla\rho|\leq 1$.
The main ingredient in our proof of Theorem~\ref{T1} 
shows Lipschitz regularity under the assumptions that $m_\e$ stays away from zero and that entropy productions are in $L^p$ for some $p>1$.
\begin{prop}\label{p:rigid_B3pconvol}
Let $m\in B^{1/3}_{3,\infty}(B_2;\mathbb S^1)$ with $\dv m=0$ and $p>1$.
Assume that $\dv\Phi(m)\in L^p(B_2)$ for all $\Phi\in\mathrm{ENT}$,
and
\begin{align*}
\limsup_{\e\to 0} \left(\inf_{B_1} |m_\e| \right) >0.
\end{align*}
Then $m$ is Lipschitz in $B_{1/2}$.
\end{prop}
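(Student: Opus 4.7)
\medskip

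\noindent\textbf{Proof plan.} The plan is to combine algebraic identities coming from the entropy structure with commutator estimates tailored to $B^{1/3}_{3,\infty}$ on the smooth mollifications $m_{\e_n}$, exploit the non-vanishing of $|m_\e|$ to unlock the scalar potential formulation, and conclude Lipschitz regularity via the continuity-implies-Lipschitz theorem of Jabin--Otto--Perthame \cite{JOP02}.

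Extract a subsequence $\e_n\to 0$ and $c>0$ with $|m_{\e_n}|\geq c$ on $B_1$. Since convolution preserves $\dv m=0$, each $m_{\e_n}$ is smooth and divergence-free, and simple connectedness of $B_1$ yields a smooth lifting $m_{\e_n}=|m_{\e_n}|e^{i\theta_n}$. For an entropy $\Phi\in\mathrm{ENT}$, parametrise $\Phi(e^{i\theta})=A(\theta)e^{i\theta}+A'(\theta)ie^{i\theta}$ (the tangential coefficient being forced to be $A'$ by the entropy condition) and consider the $1$-homogeneous extension $\tilde\Phi(z)=|z|\Phi(z/|z|)$. A direct chain-rule computation, combined with the entropy identity $\partial_\theta\Phi(e^{i\theta})\cdot e^{i\theta}=0$ and $\dv m_{\e_n}=0$, yields
\[
\dv\tilde\Phi(m_{\e_n}) = -A''(\theta_n)\,\bigl(e^{i\theta_n}\cdot\na|m_{\e_n}|\bigr) + A'(\theta_n)\,\bigl(ie^{i\theta_n}\cdot\na|m_{\e_n}|\bigr).
\]
Taking $A(\theta)=\cos(2\theta)$ and $A(\theta)=\sin(2\theta)$ the two equations form a linear system whose determinant is a nonzero constant, so $\na|m_{\e_n}|$ is recovered as a bounded function of $\theta_n$ times the two corresponding entropy productions of $m_{\e_n}$.

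To transfer this algebra to $m$, I would apply the standard commutator estimate, which uses $m\in B^{1/3}_{3,\infty}$ and $|m|=1$ to give $\|\Phi(m)*\rho_\e-\tilde\Phi(m_\e)\|_{L^{3/2}}\lesssim \e^{2/3}$, so that $\dv\tilde\Phi(m_\e)=(\dv\Phi(m))*\rho_\e+\dv(O(\e^{2/3}))$ in the sense of distributions. Combined with $\dv\Phi(m)\in L^p$, this gives essentially uniform control on $\na|m_{\e_n}|$, and then via $\dv m_{\e_n}=0$ also on the transverse angular derivative $ie^{i\theta_n}\cdot\na\theta_n$. The remaining tangential derivative $e^{i\theta_n}\cdot\na\theta_n$, which is invisible to the entropy identities (it is eliminated by $\dv m_{\e_n}=0$), would be controlled using that $im_\e=\na u_\e$ for a smooth scalar potential $u_\e$ on $B_1$ with $|\na u_\e|\geq c>0$, together with $\Delta u_\e=\dv(im_\e)$ and Calder\'on--Zygmund-type bounds.

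Passing to the limit, these uniform estimates would yield $m=e^{i\theta}$ continuous on $B_{1/2}$; the JOP theorem then produces Lipschitz regularity outside at most a locally finite set, and the lower bound $|m_\e|\geq c$ rules out any vortex-type singular point in $B_{1/2}$, giving Lipschitz regularity throughout $B_{1/2}$. The main obstacle is the tangential derivative $e^{i\theta_n}\cdot\na\theta_n$, which requires a separate argument via the scalar potential and is not directly visible to the entropy-production family; a secondary technical difficulty is balancing the commutator errors ($O(\e^{2/3})$ in an $L^{3/2}$-based norm) against the $L^p$ regularity of entropy productions, particularly when $p$ is close to $1$.
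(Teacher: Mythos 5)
Your approach is genuinely different from the paper's, but it contains a gap that I do not see how to close. The paper does not attempt to estimate $\nabla\theta_n$ at all: it works with integral curves of $\nabla u_\e$, proves via Lemma~\ref{l:epsrigid_convol} that $u_\e$ increases with almost unit speed along them (using $\nu\in L^p$ only through $\nu$ of thin neighborhoods of a curve), passes to the limit to find segments on which $u$ is affine with slope $1$, and then invokes the non-crossing of characteristics as in \cite[Lemma~5.1]{JOP02}. Your route is algebraic: recover $\nabla|m_\e|$ from an entropy linear system, then try to control the full $\nabla\theta_n$ and conclude continuity.

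The gap is exactly the one you flag but do not resolve: the tangential angular derivative. From $\nabla u_\e = im_\e$ one has $\mathrm{curl}\,m_\e = -\Delta u_\e$, and a short computation with $m_\e=\rho\,e^{i\theta}$ gives $\rho\, e^{i\theta}\cdot\nabla\theta = \mathrm{curl}\,m_\e + ie^{i\theta}\cdot\nabla\rho$. So controlling $e^{i\theta_n}\cdot\nabla\theta_n$ is equivalent (given your control on $\nabla\rho$) to controlling the curl of $m_\e$, i.e. $\Delta u_\e$. The entropy family $\mathrm{ENT}$ gives no handle on this quantity; indeed the putative entropy $\Phi(z)=iz$ whose divergence would be the curl fails the entropy condition since $\frac{d}{d\theta}(ie^{i\theta})\cdot e^{i\theta}=-1\neq 0$. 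And the remedy you propose, ``$\Delta u_\e=\dv(im_\e)$ and Calder\'on--Zygmund bounds,'' is circular: Calder\'on--Zygmund lets you pass from $\Delta u_\e$ to the full Hessian, but gives no a priori bound on $\Delta u_\e$ itself. This is precisely the obstruction that forces the paper to abandon a ``control all of $\nabla\theta$ and conclude continuity'' strategy and instead reason along characteristics, which never requires the curl.

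There is a secondary problem even if the curl could be controlled. The commutator estimate you invoke produces an $O(\e^{2/3})$ error in $L^{3/2}$ at the level of the vector field, hence an $O(\e^{2/3})$ error only in $W^{-1,3/2}$ at the level of $\dv\tilde\Phi(m_\e)$; this cannot be absorbed into the $L^p$ bound on $\dv\Phi(m)$ without further frequency localization. And even granting uniform $L^p$ bounds on the full $\nabla\theta_n$, this yields continuity of the limit $m$ only when $p>2$ (Morrey), whereas the proposition allows any $p>1$; the paper's characteristic argument works for all $p>1$ because the quantitative rigidity $\delta\sim\e^{(p-1)/(9p-6)}$ still tends to zero. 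So as written the proposal would at best prove a strictly weaker statement, and the central step---controlling the tangential derivative---has no argument.
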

The proof of Theorem~\ref{T1} follows from Proposition~\ref{p:rigid_B3pconvol}
combined with
the following property of $B^{1/3}_{6,\infty}$.
\begin{lem}\label{l:badconvolB6}
For any $m\in B^{1/3}_{6,\infty}(\Omega;\mathbb S^1)$, the set of points $x\in \Omega$
such that 
\begin{align*}
\limsup_{\e\to 0}\left( \inf_{B_r(x)}|m_\e|\right) =0\qquad\forall \,r \in \lt(0,\mathrm{dist}\lt(x,\Omega^c\rt) \rt),
\end{align*}
is Lebesgue-negligible.
\end{lem}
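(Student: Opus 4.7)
The plan is to show the stronger statement that inside any ball $B_R$ with $\bar B_R\subset\Omega$, the set of bad points is contained in a finite set; exhausting $\Omega$ by countably many such balls will then give a countable (hence Lebesgue-negligible) bad set.

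The first step is to establish the $L^3$ bound
\[
\int_{B_R}(1-|m_\e|^2)^3\,dx \leq C\e^2 \qquad\text{for }\e>0\text{ small.}
\]
Using $|m|=1$ a.e., one has the identity $1-|m_\e(x)|^2 = \tfrac12\iint |m(y)-m(z)|^2\rho_\e(x-y)\rho_\e(x-z)\,dy\,dz$. Jensen's inequality applied to the convex function $t\mapsto t^3$ with the probability measure $\rho_\e(x-y)\rho_\e(x-z)\,dy\,dz$ upgrades the integrand to an $L^6$ difference, and the Besov bound $\|m(\cdot-w)-m\|_{L^6(B_{R+\e})}^6 \leq C|w|^2$ valid for $|w|\leq 2\e$, combined with Fubini, yields the displayed estimate.

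Chebyshev now gives $|Z_\e|\leq C\e^2$ for the level set $Z_\e := \{x\in B_R : |m_\e(x)|<1/2\}$. Combined with the Lipschitz bound $|\nabla m_\e|\leq C/\e$, this shows that the $c\e$-neighborhood of $Z_\e$ lies in $\{|m_\e|<3/4\}$, which again has measure $\leq C'\e^2$ by the same Chebyshev argument. A Vitali covering then produces a cover of $Z_\e$ by at most $M$ balls of radius $\sim\e$ centered in $Z_\e$, with $M=M(R,[m]_{B^{1/3}_{6,\infty}})$ independent of $\e$, since any disjoint family of such balls must fit inside a set of measure $\lesssim\e^2$.

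Finally, extract a subsequence $\e_k\to 0$ along which the $M$ centers converge to points $y_1^\ast,\ldots,y_M^\ast\in\bar B_R$. For any $x\in B_R\setminus\{y_1^\ast,\ldots,y_M^\ast\}$, choose $\rho>0$ so small that $B_\rho(x)\subset B_R$ is disjoint from all balls $B_{c\e_k}$ around the centers for $k$ large; then $B_\rho(x)\cap Z_{\e_k}=\emptyset$, so $\inf_{B_\rho(x)}|m_{\e_k}|\geq 1/2$, showing $x$ is not bad. I expect the main obstacle to be the Vitali step: the uniform bound on $M$ is what makes the argument go through, and it hinges on matching the $\e^2$ scaling of the $L^3$ estimate with the $\e^2$ area of a ball of radius $\e$ in $\R^2$, the critical scaling corresponding to the exponent $6$ in $B^{1/3}_{6,\infty}$.
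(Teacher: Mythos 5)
Your proof is correct and follows the same overall strategy as the paper's (via its Lemma~\ref{L10}): bound the number of $\e$-balls on which $|m_\e|$ can degenerate by an $\e$-independent constant — exploiting the critical scaling $sq=2$ of $B^{1/3}_{6,\infty}$ — then extract a subsequence along which the ball centers converge and conclude the bad set is locally finite. The only difference is technical: the paper anchors its Vitali covering on the local double oscillation average $\fint_{B_\e}\fint_{B_\e}|m(x+y)-m(x+z)|^6$ and sums its pointwise lower bound over disjoint $\e$-balls, whereas you anchor the covering directly on the level set $\{|m_\e|<1/2\}$, bound $\int(1-|m_\e|^2)^3\lesssim\e^2$ by Jensen plus the Besov seminorm, and use the gradient bound $|\nabla m_\e|\lesssim 1/\e$ to fatten; both routes deliver the same $\e$-uniform cardinality bound.
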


\begin{proof}[Proof of Theorem~\ref{T1} from Proposition~\ref{p:rigid_B3pconvol} and Lemma~\ref{l:badconvolB6}]
Denote by $X\subset \Omega$ the negligible set of points in Lemma~\ref{l:badconvolB6}.
If $x\in \Omega\setminus X$, there exists $r>0$ such that $B_{2r}(x)\subset \Omega$ and
\begin{align*}
\limsup_{\e\to 0} \lt(\inf_{B_r(x)}|m_\e|\rt)  >0,
\end{align*}
thus we can apply Proposition~\ref{p:rigid_B3pconvol} to $m$ appropriately rescaled in $B_{2r}(x)$, and we deduce that $m$ is Lipschitz in $B_{r/2}(x)$. 
This implies $\dv\Phi(m)=0$ in $B_{r/2}(x)$, for any entropy $\Phi$.
Since this is valid for a.e. $x\in \Omega$ and $\dv\Phi(m)\in L^2$ thanks to \cite[Proposition~4.1]{LP23facto}, we infer that $\dv\Phi(m)=0$ in $\Omega$,
and may invoke \cite[Theorem~1.3]{JOP02} to conclude.
\end{proof}

\subsection{Improved estimate under tangent boundary conditions in a disk}

If $\Omega$ is simply connected and $m$ solves \eqref{eq:eik}, then there exists $u\colon\Omega\to\R$ such that $im=\nabla u$ and $|\nabla u|=1$ a.e. in $\Omega$.
Motivated by physical considerations, and assuming $\Omega$ has Lipschitz boundary, natural boundary conditions for this function $u$ are
\begin{align*}
	u=0 \quad\text{and}\quad
	\frac{\partial u}{\partial n} =-1\quad\text{ on }\partial \Omega,
\end{align*}
where $\partial u/\partial n$ denotes the exterior normal derivative \cite{JK00}.
In terms of $m$ this corresponds to the tangential boundary condition
\begin{align}\label{eq:tangent_bc}
m=\tau_{\partial\Omega}\quad\text{ on }\partial\Omega\,,
\end{align}
where $\tau_{\partial\Omega}=i n_{\partial\Omega}$ is the counterclockwise unit tangent to $\Omega$.

For a solution $m\in B^{1/3}_{3,\infty}(\Omega)$, 
the kinetic formulation \cite{JP01,GL20} allows to define a one-sided trace 
of $m$ on $\partial\Omega$  by the arguments in \cite{vasseur01} or \cite{DLO03}, and therefore make sense of this
tangential boundary condition.
If $m\in B^{1/3}_{p,\infty}(\Omega)$ for some $p>3$, then $m$ automatically has a trace on $\partial\Omega$ (see e.g. \cite[\S~3.3.3]{triebel83}).
Here we will replace these  trace considerations by requiring that $m$ is extended equal to  $i\nabla \dist(\cdot,\partial\Omega)$ outside $\Omega$.

Specializing to the case of the disk $\Omega=B_1$, 
we therefore consider $m\colon B_4\to\mathbb S^1$
 such that $\dv m=0$ and
\begin{align}\label{eq:cond_ext_B1_m}
	m(x)=i\frac{x}{|x|}\qquad\forall x\in B_4\setminus B_1\,.
\end{align}
%
Under these boundary conditions, we have
\begin{thm}\label{t:bc}
	Let $m\in B^{1/3}_{q,\infty}(B_4;\mathbb S^1)$ for some $(47+\sqrt{553})/12 < q \leq 6$ such that $\dv m=0$
	and \eqref{eq:cond_ext_B1_m} holds. Then
	\begin{align}\label{eq:int_B1_m}
		m(x)=i\frac{x}{|x|}\qquad\forall x\in B_1\setminus \{0\}\,.
	\end{align}
\end{thm}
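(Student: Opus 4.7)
The plan is to prove Theorem~\ref{t:bc} by first reducing it to showing that $m$ is a zero-energy state of \eqref{eq:eik} in $B_1$, and then invoking a characteristic argument for the eikonal equation together with the prescribed boundary data to pin down $m$ as the vortex $v(x):=ix/|x|$. Once $m$ is locally Lipschitz outside a locally finite singular set $S\subset B_1$, one can choose a potential $u$ with $\na u=im$, $|\na u|=1$ a.e.; the boundary data $u=0$, $\partial_n u=-1$ on $\partial B_1$ force the inward characteristics from $\partial B_1$ to travel radially and terminate at a single interior singularity at the origin, giving $u(x)=1-|x|$ and hence $m=v$ in $B_1\setminus\{0\}$.

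To obtain the zero-energy property I would apply Proposition~\ref{p:rigid_B3pconvol} on the complement of a carefully identified bad set. Since $q>3$ we have $m\in B^{1/3}_{3,\infty}(B_4;\mathbb S^1)$, and by \cite[Theorem~1.4]{LP23facto} (or its extension, in the spirit of \cite[Proposition~4.1]{LP23facto}) the entropy productions $\dv\Phi(m)$ lie in $L^{q/3}_{\loc}(B_4)$ with $q/3>1$. Define
\begin{align*}
G=\lt\lbrace x\in B_1\colon \exists\, r>0,\ \limsup_{\e\to 0}\inf_{B_r(x)}|m_\e|>0\rt\rbrace\,.
\end{align*}
Because $m$ is smooth and $\mathbb S^1$-valued on $B_4\setminus B_1$, $m_\e\to ix/|x|$ uniformly on a neighborhood of $\partial B_1$, so $G$ contains an annular neighborhood of $\partial B_1$, and a rescaled version of Proposition~\ref{p:rigid_B3pconvol} shows that $m$ is locally Lipschitz on $G$.

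The decisive step, and the place where the explicit threshold $q_0=(47+\sqrt{553})/12$ (the larger root of $6q^2-47q+69=0$) should enter, is to show $B_1\setminus G\subset\{0\}$. A natural route is a quantitative degree argument: the boundary datum has winding number $1$, so $m_\e$ must vanish at some $z_\e\in B_1$, and the identity
\begin{align*}
\int\rho_\e(z_\e-y)|m(y)-m_\e(z_\e)|^2\,dy = 1-|m_\e(z_\e)|^2
\end{align*}
combined with $m_\e(z_\e)=0$ forces $m$ to oscillate by order $1$ on $B_\e(z_\e)$. Balancing this local oscillation cost against the global Besov bound via the Besov--Sobolev embedding $B^{1/3}_{q,\infty}\hookrightarrow L^{6q/(6-q)}$ valid for $3<q<6$ should yield a quantitative constraint on the cluster points of zeros of $m_\e$, and more generally on points of $B_1\setminus G$. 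The algebraic condition for this constraint to force all cluster points to coincide with $0$ is expected to rearrange into $6q^2-47q+69>0$, i.e.\ $q>q_0$.

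Once $B_1\setminus G\subset\{0\}$ is established, $m$ is locally Lipschitz on $B_1\setminus\{0\}$, so $\dv\Phi(m)=0$ in $\mathcal D'(B_1\setminus\{0\})$ for every $\Phi\in\mathrm{ENT}$; since an $L^{q/3}$ function cannot charge the single point $\{0\}$, in fact $\dv\Phi(m)=0$ in $\mathcal D'(B_1)$. Then \cite[Theorem~1.3]{JOP02} identifies $m$ as a zero-energy state in $B_1$, and the characteristic argument from the first paragraph concludes $m=ix/|x|$ in $B_1\setminus\{0\}$. The main obstacle is the quantitative bad-set step: extracting the sharp threshold $q_0$, rather than a cruder sufficient condition, will surely rest on a delicate interpolation (possibly iterative) between the unit-order oscillation of $m$ at zeros of $m_\e$ and the global $B^{1/3}_{q,\infty}$ regularity, and this is where I would expect the real technical work to live.
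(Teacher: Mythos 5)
Your high-level plan---prove the zero-energy property, then use the classification of zero-energy states with tangent boundary data---is not the route the paper takes, and more importantly the decisive step in your plan has a genuine gap that I do not see how to close.

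The crux of your argument is the claim that $B_1\setminus G\subset\{0\}$, i.e.\ that the set of points near which $m_\e$ can degenerate is a single point. For $q<6$ this is far from clear, and the paper makes no such claim. From Lemma~\ref{L10} the bad set at scale $\e$ is covered by $N_\e\lesssim\e^{q/3-2}$ balls of radius $5\e$, and for $q<6$ we have $N_\e\to\infty$, so the natural limit set could be infinite or worse. (Lemma~\ref{l:badconvolB6}, which gives a finite limit set, is precisely the $q=6$ case where the exponent $q/3-2$ vanishes.) Your proposed ``quantitative degree argument'' identifies one zero $z_\e$ of $m_\e$ and observes that $m$ oscillates by order $1$ on $B_\e(z_\e)$, but this controls only the trajectory of one zero, not the whole set $G^c$; you never actually derive a constraint that shrinks $G^c$ to a point, and you offer no computation producing $6q^2-47q+69>0$ from the Besov--Sobolev embedding. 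The polynomial is correct, but you have reverse-engineered it without a mechanism.

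The paper's proof is structurally different and avoids ever identifying the limiting bad set. It works entirely at scale $\e$: from Lemma~\ref{L10} it extracts horizontal strips $\{a_j<x_2<b_j\}$ on which $|m_\e|\geq 1/2$ and which have widths $>2K\e^\alpha$ (with $\alpha=\alpha_{q/3}=\frac{p-1}{18p-12}$, $p=q/3$), whose complement in $[0,1]$ has 1D measure $\lesssim\e^{q/3-1}+\e^{q/3-2+\alpha}$. Lemma~\ref{l:across_strip} then produces integral curves of $\nabla u_\e$ crossing each strip nearly vertically, with the per-strip error controlled by the deviation estimate of Lemma~\ref{l:epsrigid_convol}; concatenating these curves and using the 1-Lipschitz bound on $u_\e$ across the thin excluded slabs shows $u_\e$ gains nearly the full height $1$ across $B_1$, and passing $\e\to 0$ forces $u=1-|x|$ directly, with no appeal to the zero-energy characterization of \cite{JOP02} in the proof itself. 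The threshold $q>(47+\sqrt{553})/12$ arises exactly from the requirement $2-q/3<\alpha_{q/3}$, which makes the total excluded measure $o(1)$; multiplying out gives $6q^2-47q+69>0$. This scale-by-scale strip construction is the essential idea missing from your proposal, and the pointwise statement you try to prove instead is likely false in the range $q<6$.
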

The  vortex configuration given in \eqref{eq:int_B1_m} is the only zero-energy state under the boundary condition \eqref{eq:cond_ext_B1_m} over a disk, as characterized in \cite[Theorem 1.2]{JOP02}.

\begin{rem}\label{r:tangentbc}
The ideas we use to prove Theorem~\ref{t:bc} 
can be elaborated on to obtain  that, 
under general assumptions on a smooth domain $\Omega$,
 if $m$ solves \eqref{eq:eik} in $\Omega$ with tangential boundary conditions \eqref{eq:tangent_bc} and 
has the regularity $m\in B^{1/3}_{q,\infty}$ for some $q > (47+\sqrt{553})/12$, then $\Omega$ must be a disk and $m$ given by \eqref{eq:int_B1_m}.
This works for instance if $\Omega$ is uniformly convex or analytic.
We present only Theorem~\ref{t:bc} in order to keep the presentation short and not-too-technical.
\end{rem}

\paragraph{Plan of the article.}
In \S~\ref{s:proof_rigidB3pconvol} we present the proof of Proposition~\ref{p:rigid_B3pconvol},
relying on a lemma 
proved in  \S~\ref{s:proof_characepsBr}.
In \S~\ref{s:proof_badconvolB6} we give the proof of Lemma~\ref{l:badconvolB6}, and in \S~\ref{s:refined_B1/3} we prove a refined version of a regularity estimate from \cite{GL20}. 
Finally, the proof of Theorem~\ref{t:bc} is given in \S~\ref{s:tbc}.

\paragraph{Notation.}
The notation $A\lesssim B$ stands for the existence of an absolute constant $C>0$ such that $A\leq C\, B$.

\paragraph{Acknowledgments.}
 XL received support from ANR project ANR-22-CE40-0006. 
AL  was supported in part by NSF grant DMS-2406283. 
GP was supported in part by NSF grant DMS-2206291.

\section{Proof of Proposition~\ref{p:rigid_B3pconvol}}\label{s:proof_rigidB3pconvol}

The proof of Proposition~\ref{p:rigid_B3pconvol} 
uses the link between entropy productions and a kinetic formulation discovered in \cite{JP01}, and further explored in \cite{GL20,LP23facto}.
Relevant to us are the following properties.

\begin{prop}[{\cite{GL20},\cite[Proposition~4.2]{LP23facto}}]
\label{p:kin}
Let $m
\in B^{1/3}_{3,\infty}(\Omega;\mathbb \R^2)$ a weak solution of the eikonal equation \eqref{eq:eik}.
There exists $\sigma\in \mathcal M(\Omega\times\mathbb S^1)$ such that
\begin{align*}
e^{is}\cdot\nabla_x\mathbf 1_{m(x)\cdot e^{is}>0} =\partial_s\sigma\quad\text{in }\mathcal D'(\Omega\times\mathbb S^1).
\end{align*}
If  $\dv\Phi(m)\in L^p(\Omega)$ for all $\Phi\in \mathrm{ENT}$ and some $p>1$, 
then $\sigma\in L^p(\Omega;\mathcal M(\mathbb S^1))$, that is,
 the measure 
 \begin{align}\label{eq:nu}
 \nu=(\mathrm{proj}_\Omega )_\sharp |\sigma| = \int_{\mathbb S^1} |\sigma|(\cdot, ds)\in\mathcal M(\Omega)\,,
 \end{align}
has an $L^p$ density   with respect to the Lebesgue measure.
\end{prop}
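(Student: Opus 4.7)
The plan is to construct $\sigma$ and bootstrap its $L^p$ integrability from that of entropy productions by exploiting an explicit duality between test functions on the kinetic variable $s$ and entropies in $\mathrm{ENT}$. For any $\psi\in C^\infty(\mathbb S^1)$ I introduce the candidate
$$
\Phi_\psi(\omega):= \int_{\{s\in\mathbb S^1:\,\omega\cdot e^{is}>0\}} \psi(s)\, e^{is}\, ds,\qquad \omega\in\mathbb S^1.
$$
A direct computation, differentiating in $\alpha=\arg\omega$, gives $\tfrac{d}{d\alpha}\Phi_\psi(e^{i\alpha})= ie^{i\alpha}[\psi(\alpha+\pi/2)+\psi(\alpha-\pi/2)]$, which is orthogonal to $e^{i\alpha}$, so $\Phi_\psi\in\mathrm{ENT}$. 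Integration by parts in $x$ then produces
$$
\langle e^{is}\cdot\nabla_x\chi,\,\zeta(x)\psi(s)\rangle = \langle\dv\Phi_\psi(m),\zeta\rangle\qquad\forall\,\zeta\in C^\infty_c(\Omega),
$$
where $\chi(x,s)=\one_{m(x)\cdot e^{is}>0}$. Picking $\psi\equiv 1$ one finds $\Phi_1(m)=2m$, so by $\dv m=0$ the distribution $T:=e^{is}\cdot\nabla_x\chi$ has vanishing mean in $s$; a Poincaré-type argument on $\mathbb S^1$ applied fiberwise over $\Omega$ then yields a unique distribution $\sigma$ on $\Omega\times\mathbb S^1$ with zero $s$-mean such that $\partial_s\sigma=T$.

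To upgrade $\sigma$ from a distribution to a Radon measure I would invoke the fact that for $m\in B^{1/3}_{3,\infty}$ every entropy production is a locally finite Radon measure with total variation on compacts controlled by some norm of $\Phi$ (essentially \cite{DKMO01,DLI15,GL20}). Writing a mean-zero $\psi$ as $\psi=\Psi'$ with $\|\Psi\|_{C^1}\lesssim\|\psi\|_\infty$ and integrating by parts in $s$ (which is boundary-free on $\mathbb S^1$),
$$
|\langle\sigma,\zeta\otimes\psi\rangle| = |\langle T,\zeta\otimes\Psi\rangle| = |\langle\dv\Phi_\Psi(m),\zeta\rangle| \leq C_K\|\zeta\|_\infty\|\psi\|_\infty,
$$
the choice of primitive being irrelevant because $T$ has zero $s$-mean. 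Riesz representation then gives $\sigma\in\mathcal M(\Omega\times\mathbb S^1)$.

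For the $L^p$ statement, the assumption $\dv\Phi(m)\in L^p(\Omega)$ for every $\Phi\in\mathrm{ENT}$, together with the closed graph theorem applied to the linear map $\mathrm{ENT}\to L^p_{\loc}(\Omega)$, provides a uniform bound $\|\dv\Phi(m)\|_{L^p(K)}\leq C_K\|\Phi\|_{C^2}$ on any compact $K\Subset\Omega$. The entropy correspondence above then reads
$$
|\langle\sigma,\zeta\otimes\psi\rangle|\leq C_K \|\zeta\|_{L^{p'}(K)}\|\psi\|_\infty
$$
for all $\zeta\in C^\infty_c(K)$ and smooth mean-zero $\psi$ on $\mathbb S^1$. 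Since $\int\zeta\,d\nu$ equals the supremum of $\int\zeta\psi\,d\sigma$ over $\psi\in C^0(\mathbb S^1)$ with $\|\psi\|_\infty\leq 1$, taking this supremum and using $L^p$-$L^{p'}$ duality gives $\nu\in L^p_{\loc}(\Omega)$.

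The main technical hurdle will be the measurable selection needed to pass from testing $\sigma$ against smooth $\psi$ to controlling its total variation $|\sigma|$: one must approximate, in a way compatible with the entropy correspondence, the fiberwise sign of the disintegration of $\sigma$ along $ds$ by smooth mean-zero functions on $\mathbb S^1$, and check that the mean-zero constraint is harmless (which it is because $T$ has zero $s$-mean). Once this approximation step is secured, the argument assembles as above.
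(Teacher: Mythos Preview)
The paper does not give its own proof of this proposition; it is quoted from \cite{GL20} and \cite[Proposition~4.2]{LP23facto}. So there is no in-paper argument to compare against, and your sketch is an attempted reconstruction.

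Your entropy correspondence $\psi\mapsto\Phi_\psi$ is correct and is exactly the bridge between the kinetic formulation and entropy productions used in those references. The gap lies in the passage from tensor-product bounds to the measure (resp.\ $L^p$) conclusion. The inequality
\[
|\langle\sigma,\zeta\otimes\psi\rangle|\leq C_K\,\|\zeta\|_\infty\|\psi\|_\infty
\]
does \emph{not} imply $\sigma\in\mathcal M(K\times\mathbb S^1)$: a bilinear form on $C_c(K)\times C(\mathbb S^1)$ bounded by the product of sup norms need not extend to a bounded linear functional on $C_c(K\times\mathbb S^1)$. This is a Grothendieck-type obstruction, visible already in finite dimensions, where a random $\pm n^{-3/2}$ matrix has $\ell^\infty\times\ell^\infty$ bilinear norm $O(1)$ but $\ell^1$ mass $\sqrt n$. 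The same objection applies to your $L^p$ step: for $\zeta\geq 0$, $\int\zeta\,d\nu$ is the supremum of $\langle\sigma,\phi\rangle$ over \emph{all} $\phi$ with $|\phi|\leq\zeta\otimes 1$, not only over tensor products $\zeta\otimes\psi$. So the difficulty is more serious than the measurable-selection hurdle you flag at the end: even a perfect fiberwise choice of sign $\psi_x(s)$ yields a test function $\zeta(x)\psi_x(s)$ that is not a tensor product, and your bound says nothing about it.

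The cited references avoid this dualization entirely. In \cite{GL20}, $\sigma$ is produced as the weak-$*$ limit of explicit regularizations $\sigma_\e$ whose total variation is controlled, via commutator identities, directly by the $B^{1/3}_{3,\infty}$ seminorm --- this is the same machinery reproduced in \S\ref{s:refined_B1/3} of the present paper. In \cite{LP23facto}, the $L^p$ bound on $\nu$ comes from the factorization result: $\nu$ is dominated pointwise by a fixed finite combination of Jin--Kohn entropy productions, so the hypothesis $\dv\Phi(m)\in L^p$ for those two specific entropies already forces $\nu\in L^p$, with no functional-analytic detour through the full family $\mathrm{ENT}$.
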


With these notations, 
the main ingredient in the proof of Proposition~\ref{p:rigid_B3pconvol} is the following lemma, 
which shows that any integral curve of the curl-free vector field $im_\e$  must be almost straight, 
provided $\nu\in L^p$ and $|m_\e|$ stays away from zero along the curve.

\begin{lem}\label{l:epsrigid_convol}
Let $m\in B^{1/3}_{3,\infty}( B_2;\mathbb S^1)$ with 
$\dv m=0$, hence $im=\nabla u$ for some 
\mbox{1-Lipschitz} function $u\colon B_2\to\R$.
Assume that $\nu\in L^p(B_2)$ for
some
 $p>1$, where
$\nu$ is defined in \eqref{eq:nu}. 
Let $\e\in (0,1)$.
If, for some $T>0$ and $c_0\in (0,1)$, 
there is an integral curve
$\gamma\colon [0,T]\to B_1$ 
such that
\begin{align*}
&
\dot\gamma =\nabla u_\e(\gamma)\quad\text{in }[0,T]\,,
\\
\text{and}
\quad
&
|m_\e|\geq c_0>0\quad\text{ on }\gamma([0,T])\,,
\end{align*}
then we have
\begin{align}
&
u_\e(\gamma(T))-u_\e(\gamma(0))
\geq
 |\gamma(T)-\gamma(0)|
- \delta
\,,
\label{eq:epsrigid_convol1}
\\
\text{and}
\quad
&
\gamma([0,T])\subset [\gamma(0),\gamma(T)] +B_{\delta + \sqrt{\delta T}}\,,
\label{eq:epsrigid_convol2}
\\
\text{where}\quad
&
\delta
=C
(\|\nu\|_{L^p}/c_0^2)^{\frac{p}{9p-6}}\e^{\frac{p-1}{9p-6}}T^{\frac{9p-7}{9p-6} }\,,
\nonumber
\end{align}
for some absolute constant $C>0$.
\end{lem}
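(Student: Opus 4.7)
The plan is to reduce \eqref{eq:epsrigid_convol1} to an estimate on the defect integral
\[
D := \int_0^T \bigl(1-|m_\e(\gamma(t))|^2\bigr)\,dt,
\]
to bound $D$ by comparing it to a 2D integral over a tubular neighborhood of $\gamma$ on which the $L^p$ estimate for $1-|m_\e|^2$ coming from Proposition~\ref{p:kin} applies, and finally to deduce \eqref{eq:epsrigid_convol2} from the resulting control on the angular deviation of $\dot\gamma$ from the chord direction.

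\textbf{Reduction to a defect integral.} Along $\gamma$ we have $\tfrac{d}{dt}u_\e(\gamma)=\nabla u_\e(\gamma)\cdot\dot\gamma=|m_\e(\gamma)|^2$ and $|\dot\gamma|=|m_\e(\gamma)|\le 1$, so
\[
u_\e(\gamma(T))-u_\e(\gamma(0))=\int_0^T|m_\e(\gamma)|^2\,dt, \qquad L(\gamma):=\int_0^T|m_\e(\gamma)|\,dt.
\]
Since $u_\e$ is $1$-Lipschitz, $|\gamma(T)-\gamma(0)|\ge u_\e(\gamma(T))-u_\e(\gamma(0))$. Combining with $|\gamma(T)-\gamma(0)|\le L(\gamma)$,
\[
|\gamma(T)-\gamma(0)|-\bigl(u_\e(\gamma(T))-u_\e(\gamma(0))\bigr)\le L(\gamma)-\int_0^T|m_\e|^2\,dt = \int_0^T|m_\e|(1-|m_\e|)\,dt\le D,
\]
so \eqref{eq:epsrigid_convol1} will follow from $D\le\delta$.

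\textbf{Controlling $D$.} For $\eta>0$, introduce the tube $N_\eta:=\{x\colon\dist(x,\gamma([0,T]))<\eta\}$, of area $\lesssim\eta T$. Since $|\dot\gamma|\ge c_0$, tubular coordinates around $\gamma$ together with the Lipschitz bound $|\nabla m_\e|\lesssim 1/\e$ give, for nonnegative $f$ with $\mathrm{Lip}(f)\lesssim 1/\e$,
\[
\int_0^T f(\gamma)\,dt \le \frac{C}{c_0\eta}\int_{N_\eta}f\,dx + C\frac{T\eta}{c_0\e}.
\]
Applied to $f=1-|m_\e|^2$, the 2D integral is estimated via the kinetic formulation: writing $(1-|m_\e|^2)(x)=\int|m(y)-m_\e(x)|^2\rho_\e(x-y)\,dy$, applying Jensen's inequality to upgrade to a third power, and using the translation estimate $\int|m(\cdot+h)-m|^3\lesssim|h|\|\nu\|_{L^1}$ implicit in Proposition~\ref{p:kin} (with a maximal-function argument), one obtains $\|1-|m_\e|^2\|_{L^{3p/2}}^{3/2}\lesssim\e\|\nu\|_{L^p}$, and hence by interpolation against $\|1-|m_\e|^2\|_{L^\infty}\le 1$, $\|1-|m_\e|^2\|_{L^q}\lesssim\e^{p/q}\|\nu\|_{L^p}^{p/q}$ for $q\ge 3p/2$. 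Hölder then gives $\int_{N_\eta}(1-|m_\e|^2)\lesssim(\eta T)^{1-1/q}\e^{p/q}\|\nu\|_{L^p}^{p/q}$; plugging into the tube inequality and simultaneously optimizing the choice of $q$ and of $\eta$ produces $D\le\delta$ with the stated exponents, the denominator $9p-6$ arising from the balance at $q=9p-7$.

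\textbf{Spatial containment.} Set $L=|\gamma(T)-\gamma(0)|$, $e=(\gamma(T)-\gamma(0))/L$, and let $\theta_t$ be the angle between $\dot\gamma(t)$ and $e$. By definition of $e$, $L=\int_0^T|\dot\gamma|\cos\theta_t\,dt$, whence $L(\gamma)-L=\int_0^T|\dot\gamma|(1-\cos\theta_t)\,dt\le D\le\delta$. Using $|\dot\gamma|\le 1$,
\[
\int_0^T|\dot\gamma-|\dot\gamma|e|^2\,dt=2\int_0^T|\dot\gamma|^2(1-\cos\theta_t)\,dt\le 2(L(\gamma)-L)\le 2\delta.
\]
Cauchy--Schwarz bounds the perpendicular deviation $|(\gamma(t)-\gamma(0))\cdot e^\perp|$ by $\sqrt{2T\delta}$, while the parallel coordinate equals the arc-length $s_t\in[0,L(\gamma)]\subset[0,L+\delta]$ minus an error in $[0,\delta]$, hence lies in $[-\delta,L+\delta]$. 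So $\gamma(t)$ is within $\delta+\sqrt{2T\delta}$ of the segment $[\gamma(0),\gamma(T)]$, yielding \eqref{eq:epsrigid_convol2}. The main obstacle is the careful matching of exponents in the joint optimization over $\eta$ and $q$ in the $L^q$ interpolation, which is what produces the specific $9p-6$ denominator; once $D\le\delta$ is obtained, the containment statement follows by an elementary geometric argument as above.
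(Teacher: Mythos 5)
Your reduction to bounding $D=\int_0^T(1-|m_\e(\gamma)|^2)\,dt$ and the final geometric argument deriving \eqref{eq:epsrigid_convol2} from \eqref{eq:epsrigid_convol1} are both correct (the paper uses a slightly different but equivalent reverse-triangle-inequality argument for \eqref{eq:epsrigid_convol2}), and the tube inequality has the right structure --- though the over-counting factor should be $1/c_0^2$, not $1/c_0$, since the injectivity scale of $\gamma$ is $|\gamma(t)-\gamma(s)|\ge c_0^2|t-s|$.

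The gap is in the claimed $L^q$ estimate for $1-|m_\e|^2$. You assert $\|1-|m_\e|^2\|_{L^{3p/2}}^{3/2}\lesssim\e\|\nu\|_{L^p}$ by combining Jensen with a translation estimate $\int|D^hm|^3\lesssim|h|\,\nu(\cdot)$ and a maximal-function argument. But the actual translation estimate available (Lemma~\ref{l:refined_B1/3}, equivalently Lemma~\ref{l:besovnuQr}) carries an extra term: localized to a ball $B_\e(x)$, it reads
$\sup_{|h|\le 2\e}\int_{B_\e(x)}|D^hm|^3\lesssim \e\,\nu(B_{C\e}(x))+\e^2$.
Tracing this through Jensen gives only $(1-|m_\e|^2)(x)^{3/2}\lesssim \e\,(M\nu)(x)+1$. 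The additive $+1$ is of the same size as the trivial bound $(1-|m_\e|^2)^{3/2}\le 1$, so the pointwise estimate at scale $\e$ carries no useful information where $\e M\nu\lesssim 1$, and your interpolated $L^q$ bound fails; carrying the extra term through H\"older and the tube inequality contributes an $O(T/c_0^2)$ error to $D$, which is not small. The paper circumvents exactly this difficulty by never taking pointwise information at scale $\e$: it works at a larger intermediate scale $r$, and replaces the isotropic $L^q$ bound by the anisotropic $L^1_{x_1}L^\infty_{x_2}$ estimate of Lemma~\ref{l:mod1unifproj} combined with the projection/Chebyshev argument of Lemma~\ref{l:characepsBr}. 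There the corresponding extra term is $\e^{1/2}r^{1/2}$, which is genuinely small relative to $r$ once $r\gg\e$, and disappears in the $r$-optimization. A secondary discrepancy: even granting the $L^q$ bound, your optimization produces $\delta\propto c_0^{-2(9p-7)/(9p-6)}$ rather than the stated $c_0^{-2p/(9p-6)}$ (the exponents in $\e$, $T$ and $\|\nu\|_{L^p}$ do match), so the lemma as written would not follow. For the application in Proposition~\ref{p:rigid_B3pconvol} only the $\e$-decay matters, so this second point is benign; the essential missing ingredient is an anisotropic or multi-scale replacement for the failed $L^q$ estimate.
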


\begin{proof}[Proof of Proposition~\ref{p:rigid_B3pconvol} from Lemma~\ref{l:epsrigid_convol}]
We let $u\colon B_2\to \R$ be such that $\nabla u=im$.
By assumption, there exist $c_0>0$ and a sequence $\e\to 0$ such that $|m_\e|\geq c_0$ in $B_1$.
For any $\e$ in that sequence, consider the maximal integral curve $\gamma_\e\colon (S_\e,T_\e)\to B_1$ solving
\begin{align*}
\gamma_\e(0)=0,\quad \dot\gamma_\e=\nabla u_\e(\gamma_\e).
\end{align*}
Since $u_\e$ is $1$-Lipschitz and $(d/dt)[u_\e(\gamma_\e)]=|\nabla u_\e|^2(\gamma_\e)\geq c_0^2$, 
we have
$T_\e-S_\e\leq 2/c_0^2$.
We fix 
$S_\e^*<0<T^*_\e$ such that $\gamma_\e((S^*_\e,T_\e^*))\subset B_{1/2}$ and $X_\e=\gamma_\e(S^*_\e)\in \partial B_{1/2}$, $Y_\e=\gamma_\e(T_\e^*)\in \partial B_{1/2}$.
Thanks to Lemma~\ref{l:epsrigid_convol} applied on the time intervals $[S^*_\e,0]$ and $[0,T^*_\e]$, these points $X_\e,Y_\e\in\partial B_{1/2}$ satisfy
\begin{align*}
u_\e(Y_\e)-u_\e(X_\e)\geq 1 - c \e^{\frac{p-1}{9p-6}},
\end{align*}
for some constant $c$ depending on $\|\nu\|_{L^p}$ and $c_0$.
Extracting a subsequence $\e\to 0$, we deduce the existence of $X,Y\in\partial B_{1/2}$ such that
$u(Y)-u(X)\geq 1$. Since $u$ is 1-Lipschitz and $|X-Y|\leq 1$, this implies that
$|X-Y|=1$ and $u$ is affine with slope 1 along the segment $[X,Y]$.
Rescaling, we can apply this argument to deduce that, 
for any $x\in B_1$ such that $B_{2r}(x)\subset B_1$, there exists a direction $w_x\in\mathbb S^1$ such that $u$ restricted to $x+[-rw_x,rw_x]$ is affine with slope 1.
This implies that $\nabla u$ is constant (equal to $w_x$) along that segment.
Two such segments
starting from points in $B_{1/2}$ cannot cross inside $B_{2/3}$, and this implies that m is locally
Lipschitz,
see e.g.  the proof of \cite[Lemma~5.1]{JOP02}.
\end{proof}

\begin{proof}[Proof of Lemma~\ref{l:epsrigid_convol}]
Since $u_\e$ is $1$-Lipschitz and $(d/dt)[u_\e(\gamma)]=|\nabla u_\e|^2(\gamma)\geq c_0^2$, 
we know that 
\begin{align}\label{eq:gammac0}
|\gamma(t)-\gamma(s)|\geq c_0^2|t-s|
\quad\text{ for all }s,t\in [0,T]\,.
\end{align}
For $r\in [\e,\min(1/4,T)]$, to be fixed later, 
we 
decompose the time interval $[0,T]$ into $N-1$ subintervals $[t_j,t_{j+1}]$, with
\begin{align*}
0=t_1 < t_2 <\cdots < t_N =T,\quad \frac r2 \leq t_{j+1}-t_j\leq r,\quad N\leq \frac{2T}{r}\,.
\end{align*} 
Setting $X_j=\gamma(t_j)$, 
we have
$\gamma([t_j,t_{j+1}])\subset B_r(X_j)$ since $|\dot\gamma|\leq 1$.
Moreover, the inequality \eqref{eq:gammac0}
implies $|X_i-X_j|\geq c_0^2|i-j|r/2$, and
 ensures therefore the bounded intersection property
\begin{align}\label{eq:coverB3r}
\sum_{j=1}^N \mathbf 1_{B_{4r}(X_j)} \lesssim \frac{1}{c_0^2} \mathbf 1_{\bigcup_{j=1}^N B_{4r}(X_j)}
\lesssim\frac{1}{c_0^2}\mathbf 1_{\gamma([0,T])+B_{4r}}\,.
\end{align}
Applying the estimate \eqref{eq:ue} in
Lemma~\ref{l:characepsBr} in the next section,
on each time interval $[t_j,t_{j+1}]$, 
we find, for any $\alpha>0$,
\begin{align*}
u_\e(X_{j+1})-u_\e(X_j)
&
\geq
(1-\sqrt{\alpha})|X_{j+1}-X_j| - \frac C\alpha (\nu(B_{4r}(X_j))+\e^{1/2}r^{1/2})\,,
\end{align*}
where $C>0$ is a generic absolute constant which may change from line to line in what follows.
Summing over $j$, using that $N\leq 2T/r$,
that $|\gamma(T)-\gamma(0)|\leq T$ and the property \eqref{eq:coverB3r},
 we deduce
\begin{align*}
&u_\e(\gamma(T))-u_\e(\gamma(0))
\\
&
\geq (1-\sqrt{\alpha})|\gamma(T)-\gamma(0)|
-\frac C\alpha\sum_{j=1}^N \nu(B_{4r}(X_j)) -C\frac T\alpha
\sqrt{\frac \e r}
\\
&\geq
|\gamma(T)-\gamma(0)|
-\sqrt\alpha T
-\frac C{\alpha}
\bigg(\frac{
\nu (
\gamma([0,T])+B_{4r}
)}{c_0^2} + 
 T \sqrt{\frac\e r}\,
 \bigg)
\,.
\end{align*}
Choosing
\begin{align*}
\alpha =
\bigg(\frac{
\nu (
\gamma([0,T])+B_{4r}
)}{c_0^2T} + 
\sqrt{\frac\e r}\,
 \bigg)^{\frac 23}\,,
\end{align*}
we obtain
\begin{align*}
u_\e(\gamma(T))-u_\e(\gamma(0))
&\geq
 |\gamma(T)-\gamma(0)|
-
CT \bigg(\frac{
\nu (
\gamma([0,T])+B_{4r}
)}{c_0^2T} + 
\sqrt{\frac\e r}\,
 \bigg)^{\frac 13}\,.
\end{align*}
Using that $\nu$ has an $L^p$ density and the Lebesgue measure of $\gamma([0,T])+B_{4r}$ is at most $16rT$, this implies
\begin{align*}
u_\e(\gamma(T))-u_\e(\gamma(0))
&
\geq |\gamma(T)-\gamma(0)|
-
CT \bigg(
\frac{\|\nu\|_{L^p(B_2)}}{c_0^2}
\frac{
(rT)^{1-\frac 1p}}{T} + 
\sqrt{\frac\e r}\,
 \bigg)^{\frac 13}\,.
\end{align*}
Finally we choose
\begin{align*}
r=\bigg(\frac{c_0^2}{\|\nu\|_{L^p(B_2)}}\bigg)^{\frac{2p}{3p-2}}\e^{\frac{p}{3p-2}} T^{\frac 2{3p-2}},
\end{align*}
which gives
\begin{align*}
u_\e(\gamma(T))-u_\e(\gamma(0))
&
\geq
 |\gamma(T)-\gamma(0)|
-
C
\bigg(\frac{\|\nu\|_{L^p(B_2)}}{c_0^2}\bigg)^{\frac{p}{9p-6}}
\e^{\frac{p-1}{9p-6}}T^{\frac{9p-7}{9p-6} }\,.
\end{align*}
This proves \eqref{eq:epsrigid_convol1}. 
To show \eqref{eq:epsrigid_convol2}, for any $t\in (0,T)$ we apply \eqref{eq:epsrigid_convol1} on the intervals $[0,t]$ and $[t,T]$ and,
since $u_\e$ is 1-Lipschitz, deduce the chain of inequalities
\begin{align*}
|\gamma(T)-\gamma(0)|
&
\leq |\gamma(T)-\gamma(t)| +|\gamma(t)-\gamma(0)|
\\
&
\leq u_\e(\gamma(T))-u_\e(\gamma(t)) +u_\e(\gamma(t))-u_\e(\gamma(0)) + 2\delta
\\
&
=u_\e(\gamma(T))-u_\e(\gamma(0)) +2\delta
\\
&
\leq |\gamma(T)-\gamma(0)|+2\delta\,.
\end{align*}
So we have the approximate reverse triangle inequality
\begin{align*}
|\gamma(T)-\gamma(t)| +|\gamma(t)-\gamma(0)|
\leq |\gamma(T)-\gamma(0)|+2\delta\,,
\end{align*}
which implies that $\gamma(t)$ must be close to the segment $[\gamma(0),\gamma(T)]$.
More precisely, 
let $d=\dist(\gamma(t),[\gamma(0),\gamma(T)])=|\gamma(t)-X|$ for some $X\in [\gamma(0),\gamma(T)]$.
Assume first that $X\notin \lbrace\gamma(0),\gamma(T)\rbrace$.
Then $\ell_1=|\gamma(0)-X|$ and $\ell_2=|\gamma(T)-X|$ satisfy
\begin{align*}
&|\gamma(t)-\gamma(0)| = \sqrt{\ell_1^2 +d^2}\,,
\quad
|\gamma(T)-\gamma(t)| = \sqrt{\ell_2^2 +d^2}\,,
\\
&
\text{and } \ell_1+\ell_2 = |\gamma(T)-\gamma(0)|\,.
\end{align*}
If $d\leq |\gamma(T)-\gamma(0)|$,  
then, 
using that 
$\sqrt{1+x}\geq 1+x/3$ for all $x\in [0,1]$, we deduce
\begin{align*}
|\gamma(T)-\gamma(t)| +|\gamma(t)-\gamma(0)|
&
\geq
\ell_1\sqrt{1+d^2/\ell_1^2} +\ell_2\sqrt{1+d^2/\ell_2^2}
\\
&
\geq (\ell_1+\ell_2)
\sqrt{
1+\frac{d^2}{(\ell_1+\ell_2)^2}}
\\
&\geq 
|\gamma(T)-\gamma(0)|
+
\frac{d^2}{3|\gamma(T)-\gamma(0)|}\,,
\end{align*}
hence $d\leq \sqrt{6\delta T}$.
If $d\geq |\gamma(T)-\gamma(0)|$, then we have
\begin{align*}
|\gamma(T)-\gamma(t)| +|\gamma(t)-\gamma(0)|\geq 2d \geq |\gamma(T)-\gamma(0)|+d,
\end{align*}
hence $d\leq 2\delta$. 
And if $X\in\lbrace \gamma(0),\gamma(T)\rbrace$, then we also have
\begin{align*}
|\gamma(T)-\gamma(t)|+|\gamma(t)-\gamma(0)|\geq  |\gamma(T)-\gamma(0)|+d\,,
\end{align*}
and $d\leq 2\delta$.
In all cases we have $d\leq 2\delta +\sqrt{6\delta T}$,
and, after adjusting the absolute constant $C$, this gives \eqref{eq:epsrigid_convol2}.
\end{proof}

\section{Proof of Lemma~\ref{l:characepsBr}}\label{s:proof_characepsBr}

In this section we prove the following.

\begin{lem}\label{l:characepsBr}
Let $r\in (0,1)$
and
 $m\in B^{1/3}_{3,\infty} (B_{4r};\mathbb S^1)$ such that $\dv m=0$,
 hence $im=\nabla u$ for some $1$-Lipschitz function $u\colon B_{4r}\to\R$.
 
For $0<\e\leq r$, let $\gamma\colon [t_1,t_2]\to B_r$ solve $\dot\gamma =\nabla u_\e(\gamma)$,
and denote $X_j=\gamma(t_j)$, $j=1,2$.
Then
 for all $\alpha>0$ we have
 \begin{align}\label{eq:ue}
u_\e(X_2)-u_\e(X_1)
&
 \geq 
(1-\sqrt{\alpha}) |X_2-X_1|
-\frac C\alpha (\nu(B_{4r})+\e^{1/2}r^{1/2})
\nn\\
&
\quad
+\int_{\gamma([t_1,t_2])\cap \lbrace |m_\e|\leq 1-\sqrt\alpha\rbrace}
|m_\e| \, d\mathcal H^1\,,
 \end{align}
where $C>0$ is an absolute constant.
\end{lem}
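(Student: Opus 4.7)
Denote $L := |X_2 - X_1|$ (the case $L = 0$ being trivial), set $\omega_0 := (X_2 - X_1)/L$ and $\omega := -i\omega_0$, so that $\nabla u_\e \cdot \omega_0 = m_\e \cdot \omega$. The plan is to exploit two representations of $u_\e(X_2) - u_\e(X_1)$ afforded by path-independence and match them through the kinetic formulation of Proposition~\ref{p:kin}.

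First, since $\gamma$ is an integral curve of $\nabla u_\e$, one has $|\dot\gamma| = |m_\e(\gamma)|$ and $(d/dt)u_\e(\gamma) = |m_\e(\gamma)|^2$, hence
\begin{align*}
u_\e(X_2) - u_\e(X_1) \;=\; \int_\gamma |m_\e|\, d\mathcal{H}^1.
\end{align*}
Decomposing $\gamma = G \sqcup (\gamma \setminus G)$ with $G := \gamma \cap \{|m_\e| > 1 - \sqrt\alpha\}$ and subtracting the contribution of $\gamma \setminus G$ from both sides, the inequality \eqref{eq:ue} becomes equivalent to the good-set lower bound
\begin{align*}
\int_G |m_\e|\, d\mathcal{H}^1 \;\ge\; (1-\sqrt\alpha) L - \frac{C}{\alpha}\bigl(\nu(B_{4r}) + \e^{1/2} r^{1/2}\bigr),
\end{align*}
which, since $|m_\e| \ge 1 - \sqrt\alpha$ on $G$, is implied by a length estimate $\mathcal{H}^1(G) \ge L - \frac{C'}{\alpha}(\nu(B_{4r}) + \e^{1/2}r^{1/2})$.

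On the other hand, path-independence gives
\begin{align*}
u_\e(X_2) - u_\e(X_1) \;=\; \int_0^L m_\e(X_1 + s\omega_0) \cdot \omega \,ds,
\end{align*}
so the task reduces to controlling the directional deficit $\int_0^L (1 - m_\e \cdot \omega)\, ds$ from above. This is where the kinetic formulation enters: at the angle $s_0$ with $e^{is_0} = \omega$, the density $\mathbf{1}_{m(x) \cdot \omega > 0}$ satisfies $\omega \cdot \nabla_x \mathbf{1}_{m \cdot \omega > 0} = \partial_s \sigma|_{s_0}$ in $\mathcal{D}'$. Testing this transport-type equation against the product of a spatial cutoff adapted to the tube $[X_1, X_2] + B_r \subset B_{4r}$ and a narrow bump in the angular variable $s$ around $s_0$, and then mollifying with $\rho_\e$ to transfer the estimate from $m$ to $m_\e$, should yield
\begin{align*}
\int_0^L \bigl(1 - m_\e(X_1 + s\omega_0) \cdot \omega\bigr)_+\, ds \;\lesssim\; \sqrt\alpha \, L + \frac{1}{\alpha}\bigl(\nu(B_{4r}) + \e^{1/2}r^{1/2}\bigr),
\end{align*}
where $\sqrt\alpha L$ is the allowed trivial budget, the factor $1/\alpha$ is Chebyshev-type (converting an $L^1$-in-$s$ deficit into a set-measure estimate for $\{m_\e \cdot \omega < 1 - \sqrt\alpha\}$), and the $\e^{1/2}r^{1/2}$ error comes from the mollification via the standard $B^{1/3}_{3,\infty}$ bound $\|m - m_\e\|_{L^3(B_{3r})}^3 \lesssim \e\, r^2$ followed by a Hölder-type integration on the segment of length $\le r$.

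The main obstacle I anticipate lies in the middle step: calibrating the test function paired with $\mathbf{1}_{m \cdot \omega > 0}$ in the kinetic equation so that it extracts precisely the deficit $\int_0^L (1 - m_\e \cdot \omega)_+\,ds$ while keeping the coefficient of $\nu(B_{4r})$ linear, with only one factor of $1/\alpha$. A secondary technical point is the matching between the segment estimate and the curve-integral reduction: because $|m_\e| \approx 1$ and $m_\e \cdot \omega \approx 1$ on $G$, the projection of $G$ onto $\omega_0$ should approximately cover $[X_1, X_2]$, and this geometric correspondence must be made quantitative with errors no larger than those already accounted for on the right-hand side of \eqref{eq:ue}.
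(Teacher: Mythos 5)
Your opening reductions match the paper's proof exactly: the identity $u_\e(X_2)-u_\e(X_1)=\int_\gamma|m_\e|\,d\mathcal H^1$, the split of $\gamma$ into the good set $G=\gamma\cap\{|m_\e|>1-\sqrt\alpha\}$ and its complement, and the observation that the claim reduces to $\mathcal H^1(G)\geq|X_2-X_1|-\tfrac{C'}{\alpha}(\nu(B_{4r})+\e^{1/2}r^{1/2})$. The divergence---and the gap---comes immediately afterwards. You pivot to the straight-line representation $u_\e(X_2)-u_\e(X_1)=\int_0^L m_\e(X_1+s\omega_0)\cdot\omega\,ds$ and propose to bound $\int_0^L(1-m_\e\cdot\omega)_+\,ds$. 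But even granting the bound you conjecture, this only lower-bounds $\int_\gamma|m_\e|=\int_G|m_\e|+\int_{\gamma\setminus G}|m_\e|$, \emph{not} $\int_G|m_\e|$ alone; the extra term $\int_{\gamma\setminus G}|m_\e|$ cannot be discarded, and you have no independent control on it. You flag this as a ``secondary technical point'' about matching segment and curve, but it is actually where the argument must do work, and the segment integral provides no handle on how much of $\gamma$ lies in the bad set. Moreover, the segment estimate itself controls a different quantity ($1-m_\e\cdot\omega$ along the chord) from what the good/bad decomposition is built on ($1-|m_\e|$ along the curve), and your speculated way of extracting it from the kinetic formulation---testing with a spatial cutoff and an angular bump to get a trace-type estimate on a one-dimensional segment---is not a standard consequence of Proposition~\ref{p:kin} and is left entirely unverified.

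The paper closes exactly this gap with a different device: instead of estimating a deficit along the chord, it controls the \emph{supremum over vertical slices} of $(1-|m_\e|)^2$, namely $\int_{-r}^r\big(\sup_{\{x_1\}\times(-r,r)}(1-|m_\e|)^2\big)\,dx_1\lesssim\nu(B_{4r})+\e^{1/2}r^{1/2}$ (Lemma~\ref{l:mod1unifproj} combined with the refined Besov estimate of Lemma~\ref{l:besovnuQr}). Taking $X_2-X_1$ along the $x_1$-axis, every point of $\gamma$ whose $x_1$-coordinate lies in the good slice-set $\{x_1:\sup_{x_2}(1-|m_\e|)^2\leq\alpha\}$ automatically belongs to $G$, and since $\gamma$ is connected from $X_1$ to $X_2$ its projection onto the $x_1$-axis covers $[X_1,X_2]$; Chebyshev on the slice-estimate then bounds the complementary $x_1$-set by $\tfrac{C}{\alpha}(\nu(B_{4r})+\e^{1/2}r^{1/2})$, giving $\mathcal H^1(G)\geq\mathcal H^1(\pi_1(G))\geq|X_2-X_1|-\tfrac{C}{\alpha}(\cdots)$. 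The kinetic formulation thus enters indirectly, through the finite-difference estimate on $m$, rather than through a direct trace argument on a line. You should replace the segment/deficit pivot by this sup-over-slices and projection step to complete the proof.
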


The proof of Lemma~\ref{l:characepsBr} relies on the two next lemmas, where we denote by $D^h$ the finite difference operator
\begin{align}\label{eq:Dh}
D^h f(x)=f(x+h)-f(x)\,,
\end{align}
for $h\in\R^2$.

\begin{lem}\label{l:mod1unifproj}
	For any $m\colon B_{4r}\to\mathbb S^1$ and $0<\e\leq r$ we have
	\begin{align*}
		\int_{-r}^r \left( \sup_{\lbrace x_1\rbrace\times (-r,r)} (1-|m_\e|)^2 \right)\, dx_1
		\lesssim \sup_{|h|\leq\e}\frac{1}{|h|}\int_{B_{2r}}|D^hm|^3\, dx.
	\end{align*}
\end{lem}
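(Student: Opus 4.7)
The plan is to combine a one-dimensional Sobolev estimate along vertical slices with two Jensen-type pointwise bounds for $(1-|m_\e|^2)^{3/2}$ and $|\partial_{x_2}m_\e|^3$, then close via H\"older's inequality with the dual pair $(3/2,3)$. Writing $M:=\sup_{|h|\leq \e}|h|^{-1}\int_{B_{2r}}|D^hm|^3$, the two Jensen bounds will yield $\int(1-|m_\e|^2)^{3/2}\lesssim \e M$ and $\int|\partial_{x_2}m_\e|^3\lesssim \e^{-2}M$, which are in exact H\"older duality.

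For the slice step, fix $x_1\in(-r,r)$ and set $\phi(x_2):=1-|m_\e(x_1,x_2)|\in[0,1]$, which is Lipschitz in $x_2$. The fundamental theorem of calculus applied to $\phi^2$, with the anchor point averaged over $(-r,r)$, yields
\[
\sup_{x_2\in(-r,r)}\phi^2\leq \frac{1}{2r}\int_{-r}^r\phi^2\,dx_2+2\int_{-r}^r\phi\,|\partial_{x_2}m_\e|\,dx_2,
\]
where I use $|\partial_{x_2}|m_\e||\leq|\partial_{x_2}m_\e|$ a.e. Integrating in $x_1$ reduces the lemma to bounding $\int_{(-r,r)^2}\phi^2\,dx$ and $\int_{(-r,r)^2}\phi\,|\partial_{x_2}m_\e|\,dx$ by a constant times $M$.

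The two Jensen bounds are the heart of the argument. Since $|m|=1$, one has the identity $1-|m_\e(x)|^2=\tfrac12\iint\rho_\e(y)\rho_\e(z)|m(x-y)-m(x-z)|^2\,dy\,dz$; applying Jensen's inequality with the convex function $t\mapsto t^{3/2}$ against the probability measure $\rho_\e(y)\rho_\e(z)\,dy\,dz$ gives the pointwise inequality
\[
(1-|m_\e|^2)^{3/2}\lesssim \iint\rho_\e(y)\rho_\e(z)|m(x-y)-m(x-z)|^3\,dy\,dz,
\]
and integration in $x$, the change of variable $h=y-z$, and the fact that $\rho_\e*\tilde\rho_\e$ is a probability density supported in $B_{2\e}$ yield $\int(1-|m_\e|^2)^{3/2}\lesssim \e M$. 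For the gradient, since $\int\partial_{x_2}\rho_\e=0$ one may write $\partial_{x_2}m_\e(x)=\iint\partial_{x_2}\rho_\e(y)\rho_\e(z)(m(x-y)-m(x-z))\,dy\,dz$; Jensen against the weight $|\partial_{x_2}\rho_\e(y)|\rho_\e(z)\,dy\,dz$ of total mass $\sim 1/\e$, combined with the same change of variables, gives $\int|\partial_{x_2}m_\e|^3\lesssim \e^{-2}M$.

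To conclude, note that $\phi\leq 1-|m_\e|^2$ (from $|m_\e|\leq 1$) and $\phi\leq 1$, so $\int\phi^2\leq \int\phi^{3/2}\leq \int(1-|m_\e|^2)^{3/2}\lesssim \e M$, whence the first term is $\lesssim(2r)^{-1}\e M\lesssim M$ thanks to $\e\leq r$. For the second, H\"older's inequality with exponents $3/2$ and $3$ gives
\[
\int\phi\,|\partial_{x_2}m_\e|\leq \|\phi\|_{L^{3/2}}\|\partial_{x_2}m_\e\|_{L^3}\lesssim (\e M)^{2/3}(\e^{-2}M)^{1/3}=M,
\]
which closes the estimate. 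The main obstacle I foresee is recognizing that one must aim for the $L^{3/2}$ bound on $1-|m_\e|^2$ rather than a more direct $L^1$ bound: estimating $\int(1-|m_\e|^2)$ from $|m-m|^2\leq 2|m-m|$ and H\"older in $x$ yields only $r^{2/3}\e^{2/3}M^{2/3}$, producing a spurious $M^{2/3}$ that does not close the argument. The Jensen step, which directly produces an $L^{3/2}$ bound on $1-|m_\e|^2$ in exact duality with the $L^3$ bound on $\partial_{x_2}m_\e$, is what gives the right $M^1$ scaling; minor domain discrepancies (integrals of $|D^h m|^3$ living on a slightly larger ball than $B_{2r}$ and with $|h|$ slightly larger than $\e$ after changes of variable) are absorbed in the absolute constant using that $m$ is defined on $B_{4r}$.
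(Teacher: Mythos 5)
Your proof is correct and takes essentially the same route as the paper: the one-dimensional Sobolev inequality on vertical slices, the $L^{3/2}$--$L^3$ H\"older pairing, and the two estimates $\int(1-|m_\e|)^{3/2}\lesssim\e M$ and $\int|\nabla m_\e|^3\lesssim\e^{-2}M$ closing the duality. The one difference is that the paper simply cites these two bounds to \cite[Step~6 in Proposition~3]{DLI15}, whereas you re-derive them via Jensen's inequality applied to the identity $1-|m_\e|^2=\tfrac12\iint\rho_\e(y)\rho_\e(z)|m(x-y)-m(x-z)|^2\,dy\,dz$ and the representation $\partial_{x_2}m_\e=\iint\partial_{x_2}\rho_\e(y)\rho_\e(z)\,(m(x-y)-m(x-z))\,dy\,dz$; this makes the argument self-contained but is not a genuinely different strategy.
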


\begin{lem}\label{l:besovnuQr}
	For any $r\in (0,1)$ and $m\in B^{1/3}_{3,\infty} (B_{4r};\mathbb S^1)$ such that $\dv m=0$, we have
	\begin{align*}
		\frac{1}{|h|}\int_{B_{2r}}|D^h m|^3\, dx \lesssim \nu(B_{4r})+r^{1/2}|h|^{1/2}
		\quad
		\forall h\in B_r,
	\end{align*}
	where $\nu$ is defined in \eqref{eq:nu}.
\end{lem}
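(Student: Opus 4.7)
The plan is to deduce the estimate from the kinetic formulation of Proposition~\ref{p:kin}, mirroring the strategy of \cite{GL20} for the analogous global statement and keeping track of the truncation to $B_{2r}$ in order to produce the correction $r^{1/2}|h|^{1/2}$.

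Introduce $\chi(x,s) := \mathbf 1_{m(x)\cdot e^{is}>0}$, for which one has the reconstruction $m(x) = \frac{1}{2}\int_{\mathbb S^1} e^{is}\chi(x,s)\,ds$ and the kinetic equation $e^{is}\cdot\nabla_x\chi = \partial_s\sigma$ in $\mathcal D'(B_{4r}\times\mathbb S^1)$, with $\nu = \int_{\mathbb S^1}|\sigma|(\cdot,ds)$. Writing $m = e^{i\theta}$ locally, the set $\{s : \chi(x+h,s)\neq\chi(x,s)\}$ consists of two arcs of total length $2|\Delta\theta|\simeq 2|D^h m(x)|$; hence
\[
|D^h m(x)|\simeq \int_{\mathbb S^1}|D^h\chi(x,s)|\,ds,
\]
and combined with $|D^h m|\leq 2$ this gives $|D^h m(x)|^3 \lesssim \int_{\mathbb S^1}|D^h\chi(x,s)|\,ds$. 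It therefore suffices to bound
\[
I := \int_{B_{2r}}\int_{\mathbb S^1}|D^h\chi(x,s)|\,ds\,dx.
\]

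To estimate $I$, I fix $s_0\in\mathbb R$ with $h = |h|e^{is_0}$ and, for each $s$, decompose
\[
h = |h|\cos(s-s_0)\,e^{is} + |h|\sin(s-s_0)\,ie^{is}.
\]
I then write $\chi(x+h,s)-\chi(x,s)$ as a line integral of $h\cdot\nabla_x\chi$ along the segment $[x,x+h]$, tested against a smooth cutoff supported in $B_{4r}$ and equal to $1$ on $B_{2r}$. The $e^{is}$-component is controlled by the kinetic equation $e^{is}\cdot\nabla_x\chi = \partial_s\sigma$ after an integration by parts in the $s$ variable, and contributes a term of order $|h|\,\nu(B_{4r})$. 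The $ie^{is}$-component, not directly accessible via the kinetic equation, I would estimate using Cauchy--Schwarz together with the trivial bound $|D^h\chi|\leq \mathbf 1_{\{D^h\chi\neq 0\}}$ and the pointwise measure bound $|\{s : D^h\chi(x,s)\neq 0\}|\lesssim|D^h m(x)|\leq 2$; this yields a correction of size $r^{3/2}|h|^{3/2}$.

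The main obstacle is handling the transverse direction $ie^{is}\cdot\nabla_x\chi$, for which no kinetic identity is available, and simultaneously ensuring that the truncation-induced errors are absorbed into the correction term $r^{1/2}|h|^{1/2}$ rather than contaminating the leading term $\nu(B_{4r})$. This will force the choice of a cutoff supported in $B_{4r}$ rather than $B_{2r}$ (so that integration by parts in $s$ against $\sigma$ does not introduce an extra factor of $1/r$) and demand care in saturating the kinetic bound by $|h|\,\nu(B_{4r})$ without spurious boundary contributions.
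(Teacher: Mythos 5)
There is a genuine gap, and it occurs at the very first reduction. You replace $\int_{B_{2r}}|D^h m|^3\,dx$ by $I=\int_{B_{2r}}\int_{\mathbb S^1}|D^h\chi|\,ds\,dx$ using the crude bound $|D^h m|^3\le 4|D^h m|\simeq \int_{\mathbb S^1}|D^h\chi|\,ds$. This discards exactly the cubic-in-the-jump scaling that makes the lemma true, and the reduced claim $I\lesssim |h|\,\nu(B_{4r})+r^{3/2}|h|^{3/2}$ is false. Take the pure jump solution $m=m^+\mathbf 1_{x_1>0}+m^-\mathbf 1_{x_1<0}$ with $m^\pm=(\cos\theta_0,\pm\sin\theta_0)$ and $\theta_0$ small, and $h$ parallel to $e_1$. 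Then $\sigma$ is supported on $\{x_1=0\}$, on two $s$-arcs of length $\sim\theta_0$ with density $\sim\theta_0^2$, so $\nu(B_{4r})\sim\theta_0^3 r$; meanwhile $I\simeq\int_{B_{2r}}|D^h m|\,dx\simeq\theta_0\,|h|\,r$. Dividing your target estimate by $|h|$ and letting $|h|\to 0$ at fixed $r$ would require $\theta_0 r\lesssim\theta_0^3 r$, which fails for small $\theta_0$. The correct inequality $\int_{B_{2r}}|D^h m|^3\,dx\lesssim|h|\,\nu(B_{4r})+r^{1/2}|h|^{3/2}$ does hold for this example because its left-hand side is $\sim\theta_0^3|h|r$; you have lost a factor $\theta_0^2$ by linearizing.

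This is also why the difficulty you flag with the transverse direction $ie^{is}\cdot\nabla_x\chi$ cannot be resolved by Cauchy--Schwarz in your framework: there is no smallness to be extracted from a linear functional of $D^h\chi$. The paper's route (Lemma~\ref{l:refined_B1/3}, following \cite[Lemma~3.9]{GL20}) works with the \emph{bilinear} quantity
\[
\Delta^{\e,\delta}(h,x)=\iint_{\mathbb S^1\times\mathbb S^1}\varphi_\delta(s-t)\,D^h\chi_\e(x,t)\,D^h\chi_\e(x,s)\,e^{it}\wedge e^{is}\,dt\,ds\,,
\]
which is genuinely cubic in the jump (two factors $D^h\chi$ of size $\theta_0$, plus the angular factor $e^{it}\wedge e^{is}$ of size $\theta_0$) and satisfies $\Delta\gtrsim|D^h m|^3$. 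Differentiating $\int\Delta\,\phi^2$ in $h$ then produces the kinetic identity on \emph{both} factors, and the bilinear symmetry is precisely what cancels the transverse $ie^{is}$-components, leaving only $\sigma$-terms plus commutator terms involving $\nabla\phi$. Once that identity is in hand, the lemma follows by the cutoff choice $\mathbf 1_{B_{2r}}\le\phi\le\mathbf 1_{B_{3r}}$, $|\nabla\phi|\lesssim 1/r$, which is the part of your plan (localization via $\phi$ and $\eta=|h|$) that is correct and matches the paper. To repair your argument you would need to restore the quadratic structure; a linear velocity-averaging estimate on $D^h\chi$ cannot give this lemma.
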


As a consequence of Lemmas~\ref{l:mod1unifproj} and \ref{l:besovnuQr},
under the assumptions of Lemma~\ref{l:characepsBr} we 
have
\begin{align}\label{eq:mod1unifprojnuQr}
\int_{-r}^r\left( \sup_{\lbrace x_1\rbrace\times (-r,r)} 
(1-|m_\e|)^2 \right)\, dx_1
\lesssim
 \nu(B_{4r})+\e^{1/2} r^{1/2}\,,
\end{align}
and we can proceed to prove Lemma~\ref{l:characepsBr}.

\begin{proof}[Proof of Lemma~\ref{l:characepsBr}]
We write
\begin{align*}
u_\e(X_2)-u_\e(X_1)
&
=\int_{t_1}^{t_2} \dot\gamma(t)\cdot \nabla u_\e(\gamma(t))\, dt
\\
&
=\int_{\gamma([t_1,t_2])}|m_\e|\, d\mathcal H^1
\\
&
\geq (1-\sqrt\alpha)\mathcal H^1\left(
\gamma([t_1,t_2])\cap\lbrace (1-|m_\e|)^2\leq\alpha\rbrace\right)
\\
&\quad
+
\int_{\gamma([t_1,t_2])\cap \lbrace |m_\e|\leq 1-\sqrt\alpha\rbrace}
|m_\e| \, d\mathcal H^1\,.
\end{align*}
Then we assume without loss of generality that $X_2-X_1$ is along the $x_1$-axis,
denote by $\pi_1$ the projection onto it,
and use
\eqref{eq:mod1unifprojnuQr}
to estimate
\begin{align*}
&
\mathcal H^1\left(
\gamma([t_1,t_2])\cap\lbrace (1-|m_\e|)^2\leq\alpha\rbrace\right)
\\
&
\geq \mathcal H^1\left(
\pi_1
\left[
\gamma([t_1,t_2])\cap\lbrace (1-|m_\e|)^2\leq\alpha\rbrace\right]
\right)
\\
&
\geq |X_2-X_1|-\frac{C}{\alpha}\int_{-r}^r
\left( \sup_{\lbrace x_1\rbrace\times (-r,r)} 
(1-|m_\e|)^2 \right)\, dx_1
\\
&
\geq |X_2-X_1|
-\frac{C}{\alpha}\left(\nu(B_{4r})+\e^{1/2} r^{1/2}\right)\,.
\end{align*}
Plugging this into the above estimate concludes the proof.
\end{proof}

Finally we give the proofs of Lemmas~\ref{l:mod1unifproj} and \ref{l:besovnuQr}.

\begin{proof}[Proof of Lemma~\ref{l:mod1unifproj}]
For any fixed $x_1\in (-r,r)$ we have
\begin{align*}
\sup_{\lbrace x_1\rbrace\times (-r,r)}
(1-|m_\e|)^2
&
 \leq \frac 1r \int_{-r}^r (1-|m_\e|)^2\, dx_2
+\int_{-r}^r 
\left|\frac{d}{dx_2}\left[(1-|m_\e|)^2\right]\right|\, 
dx_2
\\
&
\leq \frac 1r \int_{-r}^r 
(1-|m_\e|)^{3/2}\, 
dx_2 
+2 
\int_{-r}^r (1-|m_\e|)|\nabla m_\e|\, dx_2.
\end{align*}
Integrating with respect to $x_1$
 we deduce
\begin{align*}
&\int_{-r}^r 
\left( \sup_{\lbrace x_1\rbrace\times (-r,r)} (1-|m_\e|)^2 \right)\, dx_1
\\
&
\leq
\frac 1r \int_{B_{2r}}(1-|m_\e|)^{3/2}\, dx 
 +2\int_{B_{2r}}(1-|m_\e|)|\nabla m_\e|\, dx
 \\
 &
\leq
\frac 1r \int_{B_{2r}}(1-|m_\e|)^{3/2}\, dx 
 +2\left(\int_{B_{2r}}(1-|m_\e|)^{3/2}\, dx \right)^{\frac 23}
 \left(\int_{B_{2r}}|\nabla m_\e|^3\, dx\right)^{\frac 13}.
\end{align*}
The conclusion follows from the estimates
\begin{align*}
	\int_{B_{2r}}(1-|m_\e|)^{3/2}\, dx 
	&
	\lesssim 
	\e \sup_{|h|\leq\e}\frac{1}{|h|} \int_{B_{2r}}|D^h m|^3\, dx\,,
	\\
	\int_{B_{2r}}|\nabla m_\e|^3\, dx
	&
	\lesssim \frac{1}{\e^2}\sup_{|h|\leq\e}\frac{1}{|h|} \int_{B_{2r}}|D^h m|^3\, dx\,,
\end{align*}
see e.g.  \cite[Step~6 in Proposition~3]{DLI15},
and the fact that $\e\leq r$.
\end{proof}

\begin{proof}[Proof of Lemma~\ref{l:besovnuQr}]
This follows from 
keeping track more precisely of each step in the proof of \cite[Proposition~3.7]{GL20}, see Lemma~\ref{l:refined_B1/3}.
Choosing a test function $\phi$ in Lemma~\ref{l:refined_B1/3} such that $\mathbf 1_{B_{2r}}\leq 
\phi\leq
\mathbf 1_{B_{3r}}$ and 
$|\nabla\phi|\lesssim 1/r$ gives Lemma~\ref{l:besovnuQr}.
\end{proof}

\section{Proof of Lemma~\ref{l:badconvolB6}}\label{s:proof_badconvolB6}

%
%

Lemma~\ref{l:badconvolB6} follows from a classical covering argument which provides the following.

\begin{lem} 
\label{L10}
Let  $m\in B^{s}_{q,\infty}(\Omega;\mathbb S^1)$ for some $s\in (0,1)$ and $q\geq 1$  and $U\subset \subset \Omega$. 
For any $0<\e < \dist(U,\Omega^c)/3$, there is a finite set $X^{U}_{\e}\subset U$ 
such that
\begin{align*}
 \lt|m_{\e}\rt|\geq \frac{1}{2}\quad\text{ in } U
 \setminus \bigcup_{x\in X_\e^U} B_{5\e}(x)\,, 
 \quad
 \text{ and }
 \quad
\card(X_\e^U)\lesssim \|m\|_{B^{s}_{q,\infty}(\Omega)}^q \e^{sq-2}\,.
\end{align*}
\end{lem}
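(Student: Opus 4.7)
The plan is to turn the pointwise condition $|m_\e(x)|<1/2$ into a quantitative oscillation bound for $m$ on $B_\e(x)$ expressed through finite differences, and then combine this with a Vitali covering argument to control the cardinality using the Besov seminorm.

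\medskip
\textbf{Step 1: pointwise lower bound.} I would show that whenever $x\in U$ satisfies $|m_\e(x)|<1/2$, one has
\begin{align*}
\e^4\lesssim \int_{B_{2\e}}\int_{B_\e(x)}|D^hm(y)|^q\,dy\,dh\,,
\end{align*}
with constant depending only on $q$. Since $|m|=1$ a.e., the triangle inequality gives $|m(y)-m_\e(x)|\geq 1-|m_\e(x)|>1/2$ for a.e.\ $y\in B_\e(x)$. Writing $m(y)-m_\e(x)=\int\rho_\e(z)(m(y)-m(x-z))\,dz$, applying Jensen (with exponent $q\geq 1$), and then integrating against $\rho_\e(x-\cdot)$, which is a probability density supported in $B_\e(x)$, I obtain
\begin{align*}
2^{-q}\leq \iint\rho_\e(x-y)\rho_\e(x-w)|m(y)-m(w)|^q\,dy\,dw\,.
\end{align*}
Since $\rho_\e\leq \e^{-2}$ by the assumption $\rho\leq 1$, the right-hand side is bounded by $\e^{-4}\iint_{B_\e(x)^2}|m(y)-m(w)|^q\,dy\,dw$, which in turn is at most $\e^{-4}\int_{B_{2\e}}\int_{B_\e(x)}|D^hm(y)|^q\,dy\,dh$ after the substitution $h=w-y$. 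Note that the condition $\e<\dist(U,\Omega^c)/3$ guarantees that $y+h\in\Omega$ throughout this last integral.

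\medskip
\textbf{Step 2: Vitali extraction.} Let $E_\e:=\{x\in U:|m_\e(x)|<1/2\}$, which is open up to a null set. Cover $E_\e$ by the balls $\{B_\e(x)\}_{x\in E_\e}$ and apply the Vitali $5r$-covering lemma to extract finitely many points $x_1,\ldots,x_N\in E_\e$ such that the balls $B_\e(x_i)$ are pairwise disjoint and $E_\e\subset\bigcup_i B_{5\e}(x_i)$; set $X_\e^U:=\{x_1,\ldots,x_N\}$. Note that finiteness of $N$ is a consequence of the cardinality bound proved next.

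\medskip
\textbf{Step 3: sum and use Besov.} Summing the bound of Step 1 over $i$ and using that the $B_\e(x_i)\subset\Omega$ are disjoint,
\begin{align*}
N\e^4\lesssim \int_{B_{2\e}}\sum_i\int_{B_\e(x_i)}|D^hm(y)|^q\,dy\,dh\leq \int_{B_{2\e}}\int_{\Omega\cap(\Omega-h)}|D^hm|^q\,dy\,dh\,.
\end{align*}
The definition of $B^s_{q,\infty}(\Omega)$ gives $\|D^hm\|_{L^q(\Omega\cap(\Omega-h))}^q\leq \|m\|_{B^s_{q,\infty}(\Omega)}^q|h|^{sq}$, and integrating $|h|^{sq}$ over $B_{2\e}$ produces a factor $\e^{sq+2}$, yielding $N\lesssim \|m\|_{B^s_{q,\infty}(\Omega)}^q\e^{sq-2}$, as required.

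\medskip
The main obstacle is the pointwise estimate of Step 1: one must leverage the constraint $|m|=1$ a.e.\ together with the smoothness of $\rho_\e$ to convert the pointwise smallness $|m_\e(x)|<1/2$ into a lower bound on a genuine Besov-type quantity. The covering and summation in Steps 2 and 3 are routine once this is in place.
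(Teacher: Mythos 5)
Your proof is correct and follows essentially the same strategy as the paper's: you show that $|m_\e(x)|<1/2$ forces a quantitative lower bound on a local averaged $q$-oscillation of $m$ near $x$, extract a disjoint subfamily via the Vitali $5r$-covering lemma, and sum using the $B^s_{q,\infty}$ seminorm. The paper organizes the first step via a threshold set $\mathcal B_\e^U$ on which the averaged oscillation exceeds a fixed $\alpha$ (and shows this set contains $\{|m_\e|<1/2\}$), whereas you work directly with $\{|m_\e|<1/2\}$, but the estimates and the Vitali-plus-Besov accounting are the same up to bookkeeping.
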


\begin{proof}[Proof of Lemma~\ref{l:badconvolB6} from Lemma~\ref{L10}]
Thanks to Lemma~\ref{L10} 
applied to $m\in B^{\frac 13}_{6,\infty}$ we can select a sequence $\e_k\to 0$ and a finite set $X_*^U$ such that 
\begin{align*}
\dist(X_{\e_k}^U,X_*^U)\to 0\qquad\text{as } k\to\infty.
\end{align*}
For $x\in U\setminus X_*^U$ and   $r>0$ such that $B_{2r}(x)\subset U\setminus X_*^U$, 
we have
\begin{align*}
B_r(x)\subset U\setminus \bigcup_{x\in X_\e^U} B_{5\e_k}(x)\,,
\end{align*}
for large enough $k$, 
and therefore 
\begin{align*}
\limsup_{\e\to 0} \left(\inf_{B_r(x)} |m_\e|\right)\geq \frac 12.
\end{align*}
We conclude that the set of points considered in Lemma~\ref{l:badconvolB6} is
 locally finite.
\end{proof}

\begin{proof}[Proof of Lemma~\ref{L10}] 
Given $\alpha>0$, to be fixed later, define 
\begin{align}
\label{eqzx61}
\BI_{\e}^U:=\lt\{x\in U: \Xint{-}_{B_{\e}}\Xint{-}_{B_{\e}}\lt|m(x+y)-m(x+z)\rt|^{q} dy dz> \alpha\rt\}.
\end{align}
Since $|m|=1$ a.e. in $\Omega$ and $|\rho_\e|\leq 1/\e^2$, 
for any $x\in U\setminus \BI_\e^U$ we have
\begin{align*}
\big|1-|m_\e(x)|\big|
&
=\Big|\int_{B_\e}  \left(  |m(x-z)|\, -\Big|\int_{B_\e}m(x-y)\, \rho_\e(y) dy\Big| \right) \rho_\e(z)\, dz  \Big|
\\
&
\leq 
\int_{B_\e}\int_{B_\e}|m(x-z)-m(x-y)|\,  \rho_\e(y)\rho_\e(z) dy dz
\\
&
\lesssim 
\left(\Xint{-}_{B_\e}\Xint{-}_{B_\e}|m(x-z)-m(x-y)|^{q}\, dy dz
\right)^{\frac{1}{ q }}
\lesssim
\alpha^{1/q}\,.
\end{align*}
The last inequality follows from the fact that $x\in U\setminus \BI_\e^U$ and the definition \eqref{eqzx61} of $\BI_\e^U$.
Hence, we may fix a small enough absolute constant $\alpha >0$ 
so that
\begin{align}\label{eq:mepsBeps}
|m_\e|\geq \frac 12 \qquad\text{ in }U\setminus \BI_\e^U\,.
\end{align}
For any $x\in \BI_\e^U$ and  $\ti{x}\in B_{\e}(x)$ we have 
\begin{align*}
&\Xint{-}_{B_{2\e}}\Xint{-}_{B_{2\e}}\lt|m\lt(\ti{x}+y\rt)-m\lt(\ti{x}+z\rt)\rt|^{q} dy dz\nn\\
&\geq \frac{1}{16\pi^2\e^4}\int_{B_{\e}}\int_{B_{\e}}\lt|m\lt(x+y\rt)-m\lt(x+z\rt)\rt|^{q} dy dz\geq \frac{\alpha}{16}\,,
\end{align*}
so that
\begin{align}
\label{eqzx61.7}
\int_{B_{\e}(x)}\Xint{-}_{B_{2\e}}\Xint{-}_{B_{2\e}}\lt|m\lt(\ti{x}+y\rt)-m\lt(\ti{x}+z\rt)\rt|^{q} dy dz \, d \ti{x}\geq \frac{\pi \e^2\alpha}{16}\,,
\qquad\forall x\in\BI_\e^U\,.
\end{align}
By the Vitali covering lemma there exists a finite set
$X_\e^U\subset \BI_{\e}^U$ such that 
\begin{align}
\label{eqzx62}
 \BI_{\e}^U
 \subset \bigcup_{x\in X_\e^U} B_{5 \e}\lt(x\rt)
\end{align}
and the disks $\lt\{ B_{\e}(x)\colon x\in X_\e^U \rt\}$ are pairwise disjoint. 
Recalling \eqref{eqzx61.7} 
  we infer
\begin{align*}
\e^2   \card (X_\e^U) 
&\lesssim
 \int_{U+B_\e} \, \Xint{-}_{B_{2\e}}\Xint{-}_{B_{2\e}}  \lt|m(x+y)-m(x+z)\rt|^{q} 
\, dy dz\, dx
 \lesssim \|m\|_{ B^{s}_{q,\infty}}^{q} \e^{ sq }.
\end{align*}
The last inequality follows from the definition of $ B^{s}_{q,\infty}$ regularity.
This implies the bound
$\card (X_\e^U) \lesssim   \|m\|_{B^{s}_{q,\infty}(\Omega)}^q \e^{sq-2}$.
Combining this with \eqref{eq:mepsBeps} and the inclusion \eqref{eqzx62} concludes the proof.
\end{proof}


%
%
%

%
%

%
%


\section{Refined Besov estimate}\label{s:refined_B1/3}

The proof of $B^{1/3}_{3,\infty}$ regularity in \cite[Proposition~3.7]{GL20}
provides an estimate which can be expressed more precisely than the one stated there.

\begin{lem}\label{l:refined_B1/3}
Let $m
\in B^{1/3}_{3,\infty}(\Omega;\mathbb \R^2)$ a weak solution of the eikonal equation \eqref{eq:eik} 
and $\nu\in\mathcal M(\Omega)$ as in \eqref{eq:nu}.
For any $\phi\in C^1_c(\Omega)$ and $0<\eta<\dist(\supp(\phi),\Omega^c)$, we have
\begin{align}\label{eq:refined_B1/3}
\sup_{|h|\leq \eta}\int_{\Omega}|m(x+h)-m(x)|^3\phi^2(x)\, dx
&
\lesssim
\eta \sup_{|h|\leq \eta}\int_\Omega \phi^2(x+h)\,\nu(dx) 
\nonumber
\\
&\quad
+\eta^{3/2} \int_\Omega |\phi|^{1/2}(x)|\nabla\phi|^{3/2}(x)\, dx\,.
\end{align}
\end{lem}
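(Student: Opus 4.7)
The plan is to retrace the proof of Proposition~3.7 in \cite{GL20}, which establishes $B^{1/3}_{3,\infty}$ regularity of weak eikonal solutions via the kinetic formulation, but now carrying the cutoff $\phi$ explicitly through each step rather than absorbing it into implicit constants. This will expose the precise way in which $\phi^2(\cdot+h)\,\nu(dx)$ and $|\phi|^{1/2}|\nabla\phi|^{3/2}$ appear on the right.

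Setting $\chi(x,s):=\mathbf 1_{m(x)\cdot e^{is}>0}$, the starting point is the elementary pointwise inequality
\begin{align*}
|m(x+h)-m(x)|^3 \lesssim \iiint_{(\mathbb S^1)^3}\prod_{j=1}^3\bigl|\chi(x+h,s_j)-\chi(x,s_j)\bigr|\,ds_1 ds_2 ds_3,
\end{align*}
which follows because the set of $s\in\mathbb S^1$ where the two half-planes $\{m(x)\cdot e^{is}>0\}$ and $\{m(x+h)\cdot e^{is}>0\}$ differ has measure proportional to the angle between $m(x)$ and $m(x+h)$, hence to $|m(x+h)-m(x)|$. Multiplying by $\phi^2(x)$ and swapping the order of integration reduces the task to controlling, uniformly in $(s_1,s_2,s_3)$, the weighted integral $\int\phi^2(x)\prod_j|\chi(x+h,s_j)-\chi(x,s_j)|\,dx$. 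Restricting $(s_1,s_2,s_3)$ to three fixed disjoint arcs on which $\{e^{is_j}\}$ span $\R^2$ non-degenerately, one decomposes $h$ into components along two of these directions plus a perpendicular complement, and uses the kinetic identity $e^{is_j}\cdot\nabla_x\chi=\partial_{s_j}\sigma$ from Proposition~\ref{p:kin} to rewrite the corresponding two spatial increments as line integrals of $\partial_s\sigma$ of length at most $|h|\leq\eta$.

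Two families of contributions then arise. In the first, an integration by parts in $s$ moves the $\partial_s$'s onto the remaining bounded $\chi$-factor, whose $s$-total-variation is $\leq 1$; dualizing against $|\sigma|\leq \nu\otimes ds$ and tracking that $\phi^2$ is evaluated at the shifted point $x+h'$ with $|h'|\leq\eta$ produces the first term $\eta\sup_{|h'|\leq\eta}\int\phi^2(x+h')\,\nu(dx)$. In the second, the $s$-derivatives cross onto $\phi^2$ via integration by parts in $x$, producing the gradient $\nabla(\phi^2)=2\phi\nabla\phi$; a Cauchy--Schwarz/Young balance across the two kinetic factors (each giving an $\eta^{1/2}$ gain through the BV-type control of $\chi$ in the direction $e^{is_j}$) against this single gradient term yields the second term $\eta^{3/2}\int|\phi|^{1/2}|\nabla\phi|^{3/2}dx$. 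The supremum over $|h|\leq\eta$ commutes with these pointwise bounds, giving \eqref{eq:refined_B1/3}.

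The main obstacle is the bookkeeping in the last step: obtaining the specific exponent pair $(1/2,3/2)$ on $(\phi,|\nabla\phi|)$ requires a careful Young/Cauchy--Schwarz exchange balancing the two $\eta^{1/2}$ kinetic gains against the $|\nabla\phi|$ cost of the single integration by parts onto $\phi^2$, with the exponent $1/2$ on $\phi$ arising from the fact that only one of the three factors in the triple product is actually ``spent'' on a gradient of $\phi^2$. The rest is a mechanical rerun of \cite[Proposition~3.7]{GL20}, and the absolute constants in $\lesssim$ are independent of $\phi$, $\eta$ and $m$.
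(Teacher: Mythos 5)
Your broad strategy is correct: the paper's proof is indeed a re-run of \cite[Proposition~3.7]{GL20} with the cutoff $\phi^2$ carried through each step. But the specific decomposition you propose would not work, for two related reasons.

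First, the starting triple integral $\iiint\prod_{j=1}^3|\chi(x+h,s_j)-\chi(x,s_j)|\,ds_1ds_2ds_3$ has absolute values in each factor, and these block the programme you describe: you cannot rewrite $|\chi(x+h,s_j)-\chi(x,s_j)|$ as a line integral of $e^{is_j}\cdot\nabla_x\chi=\partial_{s_j}\sigma$, nor integrate by parts in $s_j$ on it. If you drop the absolute values to make the kinetic identity usable, the factors decouple and the triple integral becomes $\bigl(\int_{\mathbb S^1}(\chi(x+h,s)-\chi(x,s))\,ds\bigr)^3$, which is identically zero because $\int_{\mathbb S^1}\mathbf 1_{m\cdot e^{is}>0}\,ds=\pi$ regardless of $m$. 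The paper's proof avoids this by working with the \emph{bilinear} quantity $\Delta^{\e,\delta}(h,x)=\iint\varphi_\delta(s-t)D^h\chi_\e(x,t)D^h\chi_\e(x,s)\,e^{it}\wedge e^{is}\,dt\,ds$ from \cite[Lemma~3.9]{GL20}: the wedge weight $e^{it}\wedge e^{is}$, together with the mollified sign kernel $\varphi_\delta$, supplies the sign structure that makes $\Delta\gtrsim|D^hm|^3$ (\cite[Lemma~3.8]{GL20}) without absolute values, so that $\frac{d}{dh}\int\Delta^{\e,\delta}\phi^2$ can actually be computed via the kinetic equation. The mollifications in $\e$ and $\delta$ are also indispensable to justify these manipulations and are absent from your outline.

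Second, your accounting of the exponent pair $(1/2,3/2)$ on $(|\phi|,|\nabla\phi|)$ is not how it arises. You cannot get a $\phi^{1/2}$ by ``spending one of three factors on $\nabla(\phi^2)$''; moving a derivative onto $\phi^2$ produces $2\phi\nabla\phi$, not a fractional power. In the actual proof, after differentiating in $h$, integrating, and passing to the limit, one reaches an estimate of the form $\frac1\eta\sup_{|h|\leq\eta}\int|D^hm|^3\phi^2\lesssim(\nu\text{-term})+\sup_{|h|\leq\eta}\int\phi|D^hm||\nabla\phi|$, and the $\phi^{1/2}|\nabla\phi|^{3/2}$ term is extracted only afterwards, by the Young inequality $3ab\leq a^3+2b^{3/2}$ applied to $\phi|D^hm||\nabla\phi|$, followed by absorbing the resulting $\frac\lambda\eta\phi^2|D^hm|^3$ contribution back into the left-hand side. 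This self-absorption step, which you allude to only vaguely, is what actually produces the exponents and the $\eta^{3/2}$ prefactor.
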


\begin{proof}[Proof of Lemma~\ref{l:refined_B1/3}]
Recall the finite difference operator $D^h$ defined in \eqref{eq:Dh}. The calculations in \cite[Lemma~3.9]{GL20}
provide an identity for $h$-derivatives  of the quantity
\begin{align*}
\Delta^{\e,\delta}(h,x)
& = \iint_{\mathbb S^1\times\mathbb S^1}
 \varphi_\delta(s-t) D^h\chi_\e(x,t)D^h\chi_\e(x,s)\,e^{it}\wedge e^{is}\, dt\, ds,
\end{align*}
where 
\begin{align*}
&
\varphi_\delta =\varphi *\gamma_\delta,\quad \varphi(t)=\mathbf 1_{\cos(t)\sin(t)>0}-\mathbf 1_{\cos(t)\sin(t)<0},
\\
&
\chi_\e (x,t)=\left(\mathbf 1_{e^{it}\cdot m(x)>0} \right) *\rho_\e(x)\,.
\end{align*}
Here $\gamma_\delta(t)=\delta^{-1}\gamma_{\delta}(t/\delta)$ for a fixed smooth even kernel $\gamma_{\delta}\in C_c^1(-1,1)$, and $\rho_\e$ is a two-dimensional convolution kernel as above.
Thanks to Proposition~\ref{p:kin}, the function $\chi_\e$ solves the kinetic equation
\begin{align*}
e^{it}\cdot\nabla_x \chi_\e =\partial_s\sigma_\e,
\qquad
\sigma_\e =\sigma *_x\rho_\e\,.
\end{align*}
The calculations in  \cite[Lemma~3.9]{GL20} 
imply, for $0<\e<\dist(\supp(\phi),\Omega^c)-\eta$, and
scalar $h\in (-\eta,\eta)$,
\begin{align}\label{eq:dhDelta}
&
\frac{d}{d h}\int_\Omega\Delta^{\e,\delta}(he_1,x)\phi^2(x)\, dx
\nonumber
\\
&
=\int_{\Omega} I^{\e,\delta}(h,x)\phi^2(x)\, dx
 -
 2\int_{\Omega} \phi(x) \nabla\phi(x)\cdot A^{\e,\delta}(h,x)\, dx\,,
\end{align}
where $I^{\e,\delta}$ and $A^{\e,\delta}$ are given by
\begin{align*}
I^{\e,\delta}
&=
-2\int_{\mathbb S^1}
\left( 
\int_{\mathbb S^1} \varphi'_\delta(s-t)\chi_\e(x,s)\sin s \, ds
\right)\, 
\sigma_\e(x+he_1,dt)
\nonumber
\\
&\quad
+
2\int_{\mathbb S^1}
\left(
 \int_{\mathbb S^1}\varphi'_\delta(s-t)\chi_\e(x+he_1,s)\sin s \, ds
 \right)\,
 \sigma_\e(x,dt)\,,
\nonumber
\\
A_1^{\e,\delta}
&=
2\iint_{\mathbb S^1\times\mathbb S^1} \varphi_\delta(s-t)\sin s \, \cos t \,
\chi_\e(x+he_1,s)D^{he_1}\chi_\e(x,t) \, ds dt\,,
\nonumber
\\
A_2^{\e,\delta}
&
=2\iint_{\mathbb S^1\times\mathbb S^1} \varphi_\delta(s-t)\sin s\, \sin t\,
 \chi_\e(x,s)D^{he_1}\chi_\e(x,t)\, dsdt\,.
\nonumber
\end{align*}
In \cite{GL20}, the second component of the vector field $A^{\e,\delta}$ has a slightly different expression but can be put in this form using the fact that $\varphi_\delta$ is an odd function.
Using that $\varphi_\delta'$ is bounded in $L^1(\mathbb S^1)$, that $|\chi_\e|\leq 1$,
and the definition of $\nu =\int_{\mathbb S^1} |\sigma|(\cdot,ds)$, we see that
\begin{align}\label{eq:controlI}
\int_{\Omega} I^{\e,\delta}(h,x)\phi^2(x)\, dx
\lesssim \sup_{|z|<\eta}\int_\Omega \phi^2(x+z)\,\nu(dx)\,.
\end{align}
Further, the vector field $A^{\e,\delta}$
can be rewritten as
\begin{align*}
A^{\e,\delta}(h,x)=\int_{B_\e} \left(F^{\e,\delta,x,h}(m(x+z+he_1))-
F^{\e,\delta,x,h}(m(x+z))\right)\, \rho_\e(z)\, dz,
\end{align*}
for a Lipschitz vector field $F^{\e,\delta,x,h}$ (details can be found in \cite[Lemma~4.10]{LP23facto}), and this implies
\begin{align*}
|A^{\e,\delta}(h,x)|\lesssim \int_{B_\e}|D^{he_1} m(x+z)|\,\rho_\e(z)\, dz\,.
\end{align*}
Using this and \eqref{eq:controlI},
integrating \eqref{eq:dhDelta} with respect to $h$ and passing to the limit as $\e\to 0$ and $\delta\to 0$, we infer
\begin{align*}
\frac 1\eta \int_\Omega\Delta(he_1,x)\phi^2(x)\,dx
&
\lesssim \sup_{|z|\leq\eta}\int_\Omega \phi^2(x+z)\,\nu(dx)
\\
&\quad
+\sup_{|z|\leq\eta}\int_\Omega \phi(x) |D^z m(x)|\, |\nabla\phi|(x)\, dx,
\end{align*}
for all $h\in (-\eta,\eta)$, where $\Delta\gtrsim |D^{he_1} m|^3$
 thanks to \cite[Lemma~3.8]{GL20}.
This estimate does not depend on the specific choice of the direction $e_1$, 
so we deduce
\begin{align*}
\frac 1\eta \sup_{|h|\leq \eta} \int_\Omega |D^h m(x)|^3\phi^2(x)\,dx
&
\lesssim \sup_{|h|\leq \eta}\int_\Omega \phi^2(x+h)\,\nu(dx)
\\
&\quad 
+\sup_{|h|\leq \eta}\int_\Omega \phi(x) |D^h m(x)|\,|\nabla\phi|(x)\, dx.
\end{align*}
Thanks to Young's inequality $3ab\leq a^3 + 2 b^{3/2}$, 
for  any $\lambda>0$ we have 
\begin{align*}
\phi |D^h m|  |\nabla\phi|
\leq \frac 13\,\frac\lambda\eta \phi^2 |D^h m|^3 + 
\frac 23\,\frac{\eta^{1/2}}{\lambda^{1/2}}\phi^{1/2} |\nabla\phi|^{3/2}\,.
\end{align*}
Choosing $\lambda$ small enough allows to absorb the term containing $\phi^2 |D^h m|^3$ into the left-hand side,
and infer \eqref{eq:refined_B1/3}.
\end{proof}

%
%

\section{Proof of Theorem~\ref{t:bc}}\label{s:tbc}

In this section, we give the proof of Theorem~\ref{t:bc}. Recall that $m$ satisfies the boundary condition \eqref{eq:cond_ext_B1_m}, which we copy here:
\begin{align}\label{eq:cond_ext_B1_m'}
	m(x)=i\frac{x}{|x|}\qquad\forall x\in B_4\setminus B_1\,,
\end{align}
Then there is a 1-Lipschitz function $u\colon B_4\to \R$ which satisfies $im=\nabla u$ and
\begin{align}\label{eq:cond_ext_B1_u}
	u(x)=1-|x|\qquad\forall x\in B_4\setminus B_1\,.
\end{align}
In the following lemma, 
we
combine Lemma~\ref{l:epsrigid_convol} with the fact that, in $B_3\setminus B_1$, the vector field $\nabla u$ obtained from \eqref{eq:cond_ext_B1_u}
points towards the origin, 
 to
 show that, if $\nu\in L^p$ for some $p>1$, and $|m_\e|\geq 1/2$ in a
 not-too-thin horizontal  strip  which lies above the origin,
  then we can find many integral curves of $im_\e =\nabla u_\e$ crossing the  strip  from top to bottom.

\begin{lem}\label{l:across_strip}
Let $m\in B^{1/3}_{3,\infty}(B_4;\mathbb S^1)$ such that $\dv m=0$
and \eqref{eq:cond_ext_B1_m'} holds, hence $im=\nabla u$ for some 1-Lipschitz function $u\colon B_4\to\R$ satisfying \eqref{eq:cond_ext_B1_u}.
Assume that $\nu\in L^p(B_2)$ for some $p>1$, where $\nu$ is defined in \eqref{eq:nu}.
Then there exists a constant $K\geq 1$ depending on $\|\nu\|_{L^p}$ and $p$ such that the following holds true.
Let $\e\in (0,1)$ and 
assume that
\begin{align*}
&
|m_\e|\geq \frac 12\quad\text{ in the strip }
S_{a,b}=
\lbrace a<x_2<b\rbrace\cap B_3\,,
\nonumber
\\
&
\text{for some }
0<a<b\leq 1
\text{ such that }
b-a\geq 2K\e^{\alpha}\,,
\nonumber
\\
&\text{where }\alpha =\alpha_p=\frac{p-1}{18p-12}\,.
\end{align*}
Then, for every $\xi\in (-\sqrt{1-b^2},\sqrt{1-b^2})$, there exists
\begin{align*}
x\in B_{K\e^\alpha}((\xi,b))\,,
\end{align*}
and an integral curve $\gamma \colon [0,T]\to B_1$ such that
\begin{align*}
&
\dot\gamma =\nabla u_\e(\gamma)\,,\quad
\gamma(0)=x\,,\quad
\gamma(T)=(\xi',a)\,,
\end{align*}
for some $\xi'\in (-\sqrt{1-a^2},\sqrt{1-a^2})$.
\end{lem}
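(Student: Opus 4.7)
For each target $\xi\in(-\sqrt{1-b^2},\sqrt{1-b^2})$ the plan is to construct an integral curve of $\nabla u_\e$ that originates in the explicitly known exterior flow, is driven by the radial structure of the boundary data through a neighborhood of $(\xi,b)$, and is then continued across the strip down to $\{x_2=a\}$. The argument naturally splits into an exterior radial phase, a delicate cap-crossing step, and a near-straight strip crossing.

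First I would choose $Z_\xi:=2(\xi,b)/\sqrt{\xi^2+b^2}\in\partial B_2$, the point on the ray from the origin through $(\xi,b)$ with $|Z_\xi|=2$, and start the integral curve $\gamma_\xi$ of $\nabla u_\e$ there. The extension hypothesis \eqref{eq:cond_ext_B1_m'} makes $m_\e\approx ix/|x|$ on $\{|x|>1+\e\}$, so $|m_\e|$ is bounded below and $\nabla u_\e\approx -x/|x|$ there, and Lemma~\ref{l:epsrigid_convol} applied to this exterior segment ensures $\gamma_\xi$ remains within $O(\e^\alpha)$ of the radial line $s\mapsto Z_\xi(1-s/|Z_\xi|)$, entering the unit disk near $(\xi,b)/\sqrt{\xi^2+b^2}\in\partial B_1$ with almost radial velocity.

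The technical heart, and the step I expect to be the main obstacle, is the passage of $\gamma_\xi$ across the cap $B_1\cap\{x_2>b\}$, where the hypothesis $|m_\e|\geq 1/2$ is not available. I would handle this by applying Lemma~\ref{l:epsrigid_convol} to maximal sub-arcs on which $|m_\e|\geq 1/2$, accumulating the resulting tubular bounds into a total deviation controlled by $\|\nu\|_{L^p}$ on the cap via Lemma~\ref{l:refined_B1/3}. For the defect portions of the curve, where $|m_\e|<1/2$, Lemma~\ref{L10} confines them to a finite union of $5\e$-balls; by perturbing the starting point $Z_\xi$ along the ray within the tolerance $K\e^\alpha$, one can route $\gamma_\xi$ around these balls so that the near-radial approximation extends through the whole cap. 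The conclusion of this phase is that, after such an admissible perturbation, $\gamma_\xi$ passes within $K\e^\alpha$ of $(\xi,b)$, and one takes $x$ to be its nearest-approach point.

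In the third phase the assumption $|m_\e|\geq 1/2$ on the strip reactivates Lemma~\ref{l:epsrigid_convol}, so the continuation of $\gamma_\xi$ from $x$ is nearly straight with inherited almost-radial direction, and it crosses the vertical gap $b-a\geq 2K\e^\alpha$ to exit at a point $(\xi',a)\in\{x_2=a\}$ close to $(a\xi/b,a)$. The exponent $\alpha=(p-1)/(18p-12)$ is exactly half the $\e$-exponent of the deviation $\delta$ in Lemma~\ref{l:epsrigid_convol}, matching the $\sqrt{\delta T}$ tube bound \eqref{eq:epsrigid_convol2} over a time $T\sim 1$, and the constant $K$ absorbs the dependence on $\|\nu\|_{L^p}$ and $p$. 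Throughout, the delicate point is the cap crossing; this is where the hypothesis $\nu\in L^p$ is genuinely used beyond a direct invocation of Lemma~\ref{l:epsrigid_convol}.
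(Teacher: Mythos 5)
Your plan is structurally different from the paper's, and the cap-crossing phase — which you correctly identify as the main obstacle — has a genuine gap that I don't think can be filled along the lines you sketch. You propose to start the integral curve at $Z_\xi\in\partial B_2$, follow the known radial exterior flow inward, and then carry it across the cap $B_1\cap\{x_2>b\}$ before entering the strip. But in the cap we have no lower bound on $|m_\e|$, so the ODE $\dot\gamma=\nabla u_\e(\gamma)$ can slow to a stall and the curve need not exit the cap at all; Lemma~\ref{l:epsrigid_convol} explicitly requires $|m_\e|\geq c_0$ along the \emph{whole} arc it is applied to, so decomposing into sub-arcs where $|m_\e|\geq 1/2$ leaves the complementary (possibly stalled) portions uncontrolled. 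The suggestion to "route around" defect balls by perturbing $Z_\xi$ along the ray does not work: one cannot steer an integral curve, and there is no reason a one-parameter radial perturbation of the initial point should avoid all bad balls. Moreover, Lemma~\ref{L10} applied under the hypothesis of this lemma ($m\in B^{1/3}_{3,\infty}$ only) yields $\card(X_\e^U)\lesssim\e^{-1}$ balls of radius $5\e$, which can have total measure of order $\e$ but total \emph{diameter} of order $1$; that is far too many obstacles to evade by a single-parameter perturbation.

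The paper's proof sidesteps the cap entirely. It launches the curves $\gamma_\zeta$ from the midline $\{x_2=(a+b)/2\}$ of the strip, where $|m_\e|\geq 1/2$ is assumed, so every curve is nondegenerate up to the boundary of $S_{a,b}$ and Lemma~\ref{l:epsrigid_convol} applies directly. The radial exterior data is used only to show (i) that curves entering $B_1$ stay in $B_1$, forcing endpoints onto $\{x_2=a\}\cup\{x_2=b\}$, and (ii) that curves starting outside $B_1$ enter from the top line, which combined with the thin-band estimate and the non-crossing of integral curves rules out bottom-to-top crossings. Monotonicity and continuity of the flow then produce, for each target $\xi$, a curve passing within $K\e^\alpha$ of $(\xi,b)$. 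If you want to salvage your outside-in idea you would need a quantitative control of the flow across the cap that does not rely on $|m_\e|$ being bounded below there, and nothing in the available lemmas provides that.
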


\begin{proof}[Proof of Lemma~\ref{l:across_strip}]
We assume
\begin{align*}
b-a\geq 2K\e^{\alpha}\,,
\end{align*}
for some constant $K\geq 1$ that will be adjusted during the proof.
We let 
\begin{align*}
y=\frac{a+b}{2}\,,
\end{align*} 
and consider, for any $\zeta\in (-1,1)$, 
the maximal integral curve $\gamma_\zeta\colon (T_1^\zeta,T_2^\zeta)\to S_{a,b}$ 
solving
\begin{align*}
\dot\gamma_\zeta =\nabla u_\e(\gamma_\zeta)\quad
\text{in }(T_1^\zeta,T_2^\zeta)\,,
\qquad \gamma_\zeta(0)=(\zeta,y)\,.
\end{align*}
Since $|m_\e|\geq 1/2$ in $S_{a,b}$, 
we have  $(d/dt)u_\e(\gamma_\zeta) \geq 1/4$ and therefore
\begin{align*}
T_2^\zeta-T_1^\zeta \leq 4 |\gamma_\zeta(T^\zeta_2)-\gamma_\zeta(T^\zeta_1)|\leq 16\,,
\end{align*}
because $u_\e$ is $1$-Lipschitz.
This implies in particular that 
$\gamma_\zeta$ can be extended continuously to $[T_1^\zeta,T_2^\zeta]$ and $\gamma_\zeta(T_i^\zeta)\in\partial S_{a,b}$ for $i=1,2$. 
Since $(\zeta,y)$ lies at distance at least $(b-a)/2$ from $\partial S_{a,b}$ and $\gamma_\zeta$ is $1$-Lipschitz, we deduce also
\begin{align*}
\min(T_2^\zeta,-T_1^\zeta)\geq \frac{b-a}{2}
\geq  K \e^{\alpha}\,.
\end{align*}
%
%
%
Thanks to 
 Lemma~\ref{l:epsrigid_convol},
we know that 
$u_\e$ increases with almost unit speed along 
the curve $\gamma_\zeta$,
which must therefore be almost straight:
\begin{align}
&
u_\e(\gamma_\zeta(t))-u_\e(\gamma_\zeta(s))
\geq
|\gamma_\zeta(t)-\gamma_\zeta(s)|
-\kappa\e^{2\alpha}\,,
\label{eq:gammazeta_uepsincrease}
\\
&
\dist(
\gamma_\zeta([s,t]),[\gamma_\zeta(s),\gamma_\zeta(t)]
)
\leq \frac \kappa 2 \e^{\alpha}
 \qquad\quad\text{for all }s<t\in [T_1^\zeta,T_2^\zeta]\,,
 \label{eq:gammazeta_close_segment}
\end{align}
for some constant $\kappa>0$ depending on $\|\nu\|_{L^p}$ and $p$.
This implies in particular that the image of $\gamma_\zeta$ is contained in a thin band around the line
\begin{align*}
L_\zeta 
&
=(\zeta, y) +\R w_\zeta\,,\quad
w_\zeta 
=\frac{\gamma_\zeta(T_2^\zeta)-\gamma_\zeta(T_1^\zeta)}
{|\gamma_\zeta(T_2^\zeta)-\gamma_\zeta(T_1^\zeta)|}\,,
\end{align*}
namely,
\begin{align}
\gamma_\zeta([T_1^\zeta,T_2^\zeta])
\subset L_\zeta +B_{\kappa\e^\alpha}\,.
\label{eq:gammazeta_thinband}
\end{align}
Next we gather some information about these integral curves. 
First, due to the explicit expression \eqref{eq:cond_ext_B1_u} of $u$  outside $B_1$, 
there
the vector field $\nabla u_\e$ always points towards the inside of $B_1$,
and so any integral curve which intersects $B_1$ must stay in $B_1$ at later times.
 This implies that
\begin{align*}
\gamma_\zeta(T_2^\zeta)\in B_1\cap \partial S_{a,b} =B_1\cap \lbrace x_2 =a\text{ or }b\rbrace\,, 
\qquad
\text{for }|\zeta|\leq \sqrt{1-y^2}\,.
\end{align*}
Second, if $K>2\kappa$, 
then for any $\zeta\in [-\sqrt{1-y^2},\sqrt{1-y^2}]$, the entering point $\gamma_\zeta(T_1^\zeta)$ and the exit point $\gamma_\zeta(T_2^\zeta)$ cannot lie both 
on the top horizontal line $\R\times\lbrace b\rbrace$ 
or both on the bottom horizontal line $\R\times\lbrace a\rbrace$.
Indeed, in that case we would have $L_\zeta =\R\times\lbrace b\rbrace$ or $\R\times\lbrace a\rbrace$,
hence the thin band $L_\zeta +B_{\kappa \e^{ \alpha}}$
would not intersect the horizontal line $\R\times\lbrace y\rbrace$ which contains $\gamma_\zeta(0)$, contradicting the fact that by \eqref{eq:gammazeta_thinband} the image of $\gamma_\zeta$ must be contained in that thin band. 

The explicit expression
 \eqref{eq:cond_ext_B1_u} of $u$ in $B_3\setminus B_1$, also implies that for $(\zeta,y)$ outside $B_1$, the entering point $\gamma_\zeta(T_1^\zeta)$ of $\gamma_\zeta$
  lies on the top horizontal line $\R\times\lbrace b\rbrace$. 
 By the previous remark, for these curves the exit point $\gamma_\zeta(T_2^\zeta)$ must lie on the bottom horizontal line 
 $\R\times\lbrace a\rbrace$.
Since integral curves cannot intersect,
 for each $\zeta\in (-1,1)$ we deduce the alternative
\begin{align}
\label{eq:alternative_gamma_zeta}
&
\gamma_\zeta(T_1^\zeta)\in \R\times\lbrace b\rbrace\text{ and }\gamma_\zeta(T_2^\zeta)\in \R\times\lbrace a\rbrace\,,
\nonumber
\\
\text{or}\quad
&
\gamma_\zeta(T_2^\zeta)\in \R\times\lbrace b\rbrace\text{ and }\gamma_\zeta(T_1^\zeta)\in \R\times\lbrace a\rbrace\,.
\end{align}

\begin{figure}[htbp]
	\centering
	\begin{tikzpicture}
		\usetikzlibrary{calc} 
		
		\draw[thick] (0,1) -- (12.5,1);
		\node at (13.5,1) {\(\mathbb{R} \times \{a\}\)}; 
		
		\draw[thick] (0,5) -- (12.5,5);
		\node at (13.5,5) {\(\mathbb{R} \times \{y\}\)}; 
		
		\draw[thick] (3,5) circle (1); 
		
		\coordinate (center) at (3,5);
		\def\radius{1}
		
		\coordinate (tangent1) at ($(center) + (60:\radius)$); 
		\coordinate (tangent2) at ($(center) + (240:\radius)$); 
		\coordinate (perp) at ($(tangent2) + (0,{-4 + sqrt(3)/2})$); 
		\coordinate (gamma) at (9.3,1);
		
		\draw[thick, blue] ($(tangent1) + ({-0.5*sqrt(3)},0.5)$) -- ($(tangent1) + ({5.1*sqrt(3)},-5.1)$); 
		
		\draw[thick, blue] ($(tangent2) + ({-1.4*sqrt(3)},1.4)$) -- ($(tangent2) + ({3.4*sqrt(3)},-3.4)$); 
		
		\coordinate (B) at ($(perp) + ({sqrt(3)*(4 - sqrt(3)/2)},0)$); 
		\coordinate (A) at ($(center) + (-2,0)$); 
		
		\fill[black] (tangent2) circle (2pt); 
		\fill[black] (center) circle (2pt); 
		\fill[black] (perp) circle (2pt); 
		\fill[black] (B) circle (2pt); 
		\fill[black] (A) circle (2pt);
		\fill[black] (gamma) circle (2pt);
		
		\node[below left] at (tangent2) {\(T\)};
		\node[below left] at (B) {\(B\)}; 
		\node[above] at (center) {\(O\)};
		\node[below left] at (A) {\(A\)};
		\node[below] at (perp) {\(P\)};
		\node[below] at (gamma) {\(\gamma_{\zeta}(T_j^{\zeta})\)};
		\node at (0.5, 5.7) {\(L_\zeta^-\)};
		
		\draw[dashed, thick] (tangent2) -- (perp); 
		\draw[dashed, thick] (center) -- (tangent2); 
		\draw[dashed, thick, red] (center) -- (gamma); 
	\end{tikzpicture}
	\caption{Estimating the horizontal width of the thin band $L_\zeta +B_{\kappa\e^\alpha}$.}
	\label{f1}
\end{figure}
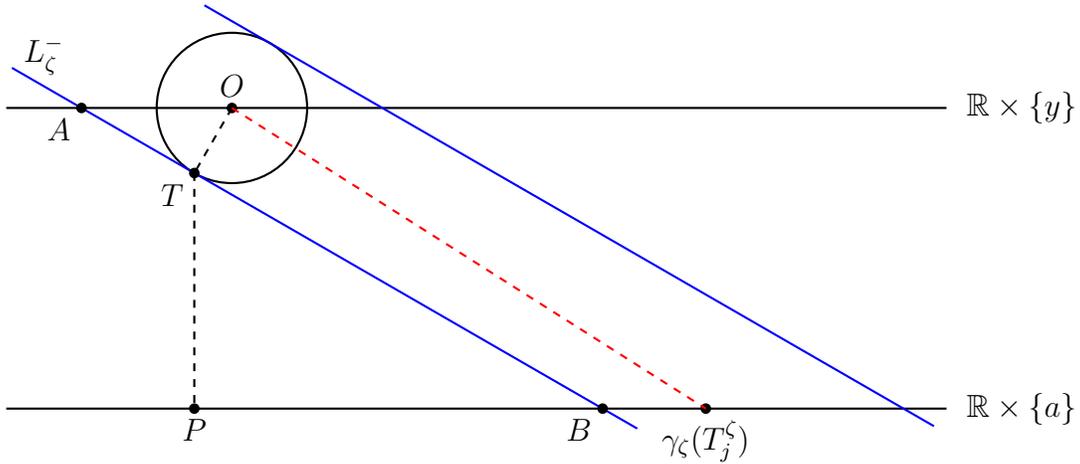

Next we show that, in the alternative \eqref{eq:alternative_gamma_zeta}, the second case actually never happens. 
To this end, given $\zeta\in (-1, 1)$, 
we first estimate the horizontal width
$h_\zeta$ of the thin band $L_\zeta +B_{\kappa\e^\alpha}$,
in terms of the lengths of the segments $[\gamma_\zeta(0),\gamma_{\zeta}(T_j^\zeta)]$, for $j=1,2$.

 Assume without loss of generality that the slope of $L_\zeta$ is negative. Let $A$ and $B$ denote the intersection points of the left-most line $L_\zeta^-$ of the band $L_\zeta +B_{\kappa\e^\alpha}$ with the horizontal lines $\R\times\{y\}$ and $\R\times\{a\}$, respectively (see Figure~\ref{f1}). Further, let $O=\gamma_{\zeta}(0)$, let $T$ be the orthogonal projection of $O$ onto the line $L_\zeta^-$, and let $P$ be the orthogonal projection of $T$ onto the horizontal line $\R\times\{a\}$. Since the triangles $OAT$ and $TBP$ are similar, we have
\begin{equation*}
	\frac{|O-A|}{|O-T|} = \frac{|T-B|}{|T-P|}\,.
\end{equation*}
Denoting by $h_\zeta = 2|O-A|$, and using $|O-T| = \kappa\e^\alpha$ and $|T-P|\geq \frac{b-a}{2} - \kappa\e^\alpha \geq (K-\kappa)\e^\alpha$, we obtain
\begin{equation*}
	h_\zeta \leq \frac{\kappa}{K-\kappa}\,2|T-B| \leq \frac{|T-B|}{2}\,,
\end{equation*}
provided $K>5\kappa$. Now assuming $\gamma_\zeta(T_j^\zeta)\in \R\times\{a\}$, we have $|\gamma_\zeta(0)-\gamma_\zeta(T_j^\zeta)| \geq |T-B|$ and deduce from the above that
\begin{equation}\label{eq:est_h_zeta}
	h_\zeta \leq \frac{|\gamma_\zeta(0)-\gamma_\zeta(T_j^\zeta)|}{2}\,. 
\end{equation}
Repeating the above argument between the horizontal lines $\R\times\{b\}$ and $\R\times\{y\}$, we obtain the estimate \eqref{eq:est_h_zeta} for $j=1, 2$.
\color{black}

For $|\zeta|\geq \sqrt{1-y^2}$ we have seen already that we are in the first case in \eqref{eq:alternative_gamma_zeta}.
Assume by contradiction that the second case does happen.
Then we can find $\zeta<\zeta'\in (-1,1)$ such that $|\zeta'-\zeta|\leq \kappa\e^{ \alpha }$ and
\begin{align*}
\gamma_\zeta(T_1^\zeta),\gamma_{\zeta'}(T_2^{\zeta'})\in \R\times\lbrace b\rbrace\quad\text{ and }\quad\gamma_\zeta(T_2^\zeta),\gamma_{\zeta'}(T_1^{\zeta'})\in \R\times\lbrace a\rbrace\,.
\end{align*}
According to \eqref{eq:gammazeta_thinband} the thin bands
\begin{align*}
L_\zeta +B_{\kappa\e^\alpha}
\quad
\text{and}
\quad 
L_{\zeta'} +B_{\kappa\e^\alpha}\,,
\end{align*}
cannot fully cross inside the horizontal strip $\R\times [a,b]$. 
The size of the horizontal segment formed by the two intersection points of the left-most line of the left thin band and the right-most line of the right thin band
 with the horizontal line $\R\times \lbrace z\rbrace$ is an affine function 
 $h(z)$ of $z$.
So its minimum on $[a,b]$ is attained at $a$ or $b$, and this implies
 \begin{align*}
 	\min\big(h(a), h(b)\big) \leq h(y) \leq h_\zeta + h_{\zeta'}\,, 
 \end{align*}
where the last inequality follows from the assumption $|\zeta'-\zeta|\leq \kappa\e^{ \alpha }$. Assume for instance that the minimum in the above
 left-hand side is attained by $h(a)$, then using the property \eqref{eq:est_h_zeta} we obtain
  \begin{align*}
 	|\gamma_{\zeta}(T_2^\zeta)-\gamma_{\zeta'}(T_1^{\zeta'})|
 \leq h(a) \leq \frac{|\gamma_\zeta(0)-\gamma_\zeta(T_2^\zeta)|}{2} + \frac{|\gamma_{\zeta'}(0)-\gamma_{\zeta'}(T_1^{\zeta'})|}{2}\,. 
 \end{align*}
 Using the increasing property \eqref{eq:gammazeta_uepsincrease} of $u_\e$ along these integral curves, we find
\begin{align*}
&u_\e(\gamma_{\zeta}(T_2^\zeta))-u_\e(\gamma_{\zeta'}(T_1^{\zeta'}))
\\
&
\geq u_\e(\gamma_{\zeta}(T_2^\zeta))-u_\e(\gamma_\zeta(0))
+
u_\e(\gamma_{\zeta'}(0))-u_\e(\gamma_{\zeta'}(T_1^{\zeta'}))
- |\gamma_\zeta(0)-\gamma_{\zeta'}(0)|
\\
&
\geq |\gamma_\zeta(0)-\gamma_\zeta(T_2^\zeta)|+|\gamma_{\zeta'}(0)-\gamma_{\zeta'}(T_1^{\zeta'})|-3\kappa\e^{ \alpha} \,.
\end{align*}
But since $u_\e$ is 1-Lipschitz and $|\gamma_\zeta(0)-\gamma_\zeta(T_2^\zeta)|\geq K\e^{ \alpha}$, $|\gamma_{\zeta'}(0)-\gamma_{\zeta'}(T_1^{\zeta'})|\geq K\e^{ \alpha}$, the above two estimates lead to the contradiction 
\begin{align*}
	K\e^{ \alpha}\leq\frac{|\gamma_\zeta(0)-\gamma_\zeta(T_2^\zeta)|}{2} + \frac{|\gamma_{\zeta'}(0)-\gamma_{\zeta'}(T_1^{\zeta'})|}{2} \leq 3\kappa\e^{ \alpha}\,.
\end{align*}
This demonstrates
 our claim that
all curves $\gamma_\zeta$ are in the first case of the above alternative \eqref{eq:alternative_gamma_zeta}, namely,
\begin{align*}
&
\gamma_\zeta(T_1^\zeta)\in \R\times\lbrace b\rbrace\text{ and }\gamma_\zeta(T_2^\zeta)\in \R\times\lbrace a\rbrace\,,
\qquad\forall \zeta\in (-1,1)\,.
\end{align*}

We obtain therefore two functions $\xi_1,\xi_2\colon (-1,1)\to \R$, characterized by
\begin{align*}
\gamma_\zeta(T_1^\zeta)=(\xi_1(\zeta),b),\qquad
\gamma_\zeta(T_2^\zeta)=(\xi_2(\zeta),a)\,.
\end{align*}
As integral curves cannot cross, both functions $\xi_1,\xi_2$ are monotone increasing, and thanks to the explicit expression \eqref{eq:cond_ext_B1_u} of $u$ outside $B_1$ we 
have
\begin{align*}
&
\xi_1(-\sqrt{1-y^2})<-\sqrt{1-b^2},\quad \xi_1(\sqrt{1-y^2})>\sqrt{1-b^2}\,,
\\
&\xi_2(-\sqrt{1-y^2})>-\sqrt{1-a^2}\,,
\quad \xi_2(\sqrt{1-y^2})<\sqrt{1-a^2}\,.
\end{align*}
Since $\xi_2$ is increasing, this implies $|\xi_2(\zeta)|<\sqrt{1-a^2}$ if $|\zeta|< \sqrt{1-y^2}$.
And since $\xi_1$ is increasing, this implies that
for any $\xi\in (-\sqrt{1-b^2},\sqrt{1-b^2})$,
we can find $\zeta_*\in (-\sqrt{1-y^2},\sqrt{1+y^2})$ such that
\begin{align*}
\lim_{\substack{\zeta\to\zeta_*\\ \zeta <\zeta_*}}\xi_1(\zeta) = \xi_1(\zeta_*^-)\leq \xi \leq \xi_1(\zeta_*^+)
= \lim_{\substack{\zeta\to\zeta_*\\ \zeta >\zeta_*}}\xi_1(\zeta)\,.
\end{align*}
By continuity of the flow generated by $\nabla u_\e$, we can fix
$\delta >0$ and $T_1^*\in (T_1^{\zeta_*},0)$ such that, for all $\zeta\in (\zeta_*-\delta,\zeta_*+\delta)$, we have
\begin{align*}
T_1^\zeta < T_1^*\quad\text{and}
\quad
\gamma_\zeta(T_1^*)\in B_{\kappa\e^\alpha}(\xi_1(\zeta_*)) \,.
\end{align*}
We fix $\zeta' \in (\zeta_*-\delta,\zeta_*)$ and $\zeta''\in (\zeta_*,\zeta_* +\delta)$, so that
 $\xi_1(\zeta')<\xi<\xi_1(\zeta'')$, and
$\gamma_{\zeta'}(T_1^*)$ and $\gamma_{\zeta''}(T_1^*)$
belong to the thin horizontal band $\R\times [b-\kappa\e^\alpha,b]$.
Thanks to the property \eqref{eq:gammazeta_close_segment}
we deduce
that the set
\begin{align*}
\Gamma =\gamma_{\zeta'}([T_1^{\zeta'},T_1^*])
\cup\gamma_{\zeta''}([T_1^{\zeta''},T_1^*])\,,
\end{align*}
is contained in the thin horizontal band $\R\times [b-2\kappa\e^\alpha,b]$.
Moreover, the orthogonal projection of $\Gamma$ onto the line $\R\times \lbrace b\rbrace$
contains $[\xi_1(\zeta'),\xi_1(\zeta'')]$ minus an interval of size at most $2\kappa\e^\alpha$,
so that projection must intersect the
interval $[\xi-\kappa\e^\alpha,\xi+\kappa\e^\alpha]\times\lbrace b\rbrace$.
Thus we can find $\tilde\zeta\in \lbrace \zeta',\zeta''\rbrace$ 
and $\tilde T\in [T_1^{\tilde\zeta},T_1^*]$ such that
\begin{align*}
x=\gamma_{\tilde\zeta}(\tilde T) \in B_{3\kappa\e^{\alpha}}((\xi,b))
\subset B_{K\e^\alpha}((\xi,b))
\,,
\end{align*}
provided $K\geq 3\kappa$.
The curve $\gamma(t) =\gamma_{\tilde\zeta}(\tilde T +t)$ satisfies the conclusion of Lemma~\ref{l:across_strip},
with $T=T_2^{\tilde\zeta}-\tilde T$ and $\xi'=\xi_2(\tilde\zeta)$.
\end{proof}

\begin{proof}[Proof of Theorem~\ref{t:bc}]
	If $m\in B^{1/3}_{q,\infty}$ for some $q>\frac{\lt(47+\sqrt{553}\rt)}{12}$, then,
	applying Lemma \ref{L10},
	there exists a finite set $X_{\e}\subset B_1$ such that 
	\begin{align*}
		|m_\e|\geq \frac 12 \text{ in } B_2 \setminus \bigcup_{x\in X_\e}B_{5\e}(x),\qquad\card(X_\e)\lesssim \|m\|_{B^{1/3}_{q,\infty}(B_2)}^q \e^{\frac{q}{3}-2}\,.
	\end{align*}
	Further, we have $\nu\in L^p$ for $p=q/3$ by \cite[Proposition 4.2]{LP23facto}. One can check directly that, for $5.876\approx(47+\sqrt{553})/12<q\leq 6$, we have
	\begin{equation*}
		2-\frac q3 < \alpha=\alpha_p = \frac{p-1}{18p-12}\,.
	\end{equation*}
	Then, for small enough $\e>0$,
	we can find 
	\begin{align*}
		0<\e<a_N<b_N<a_{N-1}<b_{N-1}<\cdots < a_1<b_1\leq 1,
	\end{align*}
	such that 
	\begin{align}
		&|m_\e|\geq \frac 12 \quad\text{in }\bigg(\R\times\bigcup_{j=1}^N (a_j,b_j) \bigg)\cap B_2\,,
		\nn\\
		&
		N\leq\card(X_\e),\quad
		b_j-a_j > 2K \e^\alpha\,,
		\quad
		\mathcal L^1\bigg([0,1]\setminus \bigcup_{j=1}^N (a_j,b_j) \bigg)
		\leq \e^\delta\,,\label{eq:strips}
	\end{align}
	where $\delta=(\alpha-2 +q/3)/2>0$.
	
	Then, inductively applying Lemma~\ref{l:across_strip} on each strip 
	$\lbrace a_j<x_2<b_j\rbrace$ starting from the point 
$(\xi_1, b_1)$ with $\xi_1=0$, we build integral curves $\gamma_j: [0, T_j]\to B_1$ such that
	\begin{align*}
		&\dot\gamma_j =\nabla u_\e(\gamma_j)\,,\quad
		\gamma_j(0)=X_j\,,\quad
		\gamma_j(T_j)=(\xi'_j,a_j)=Y_j\,,
		\\
		&
		X_j\in B_{K\ep^\alpha}((\xi_j, b_j))\,,
		\quad
		|\xi'_j|<\sqrt{1-a_j^2}\,,
		\quad
		\xi_{j+1} = \xi'_j\,,
	\end{align*}
 for $j=1,\ldots,N$. Finally, we set 
 $\zeta = \xi'_N$ and write
	\begin{align}\label{eq:u_ep}
		u_\ep(\zeta, 0)-u_\ep(0,1)&=u_\ep(\zeta,0)-u_\ep(Y_{N})+\sum_{j=1}^N \lt(u_\ep(Y_j)-u_\ep(X_j)\rt)\nn\\
		&\quad+\sum_{j=2}^{N} \lt(u_\ep(X_j)-u_\ep(Y_{j-1})\rt)+(u_\ep(X_1)-u_\ep(0,1)).
	\end{align}

	Using the increasing property \eqref{eq:gammazeta_uepsincrease} of the integral curves $\gamma_j$ and \eqref{eq:strips}, we have
	\begin{equation*}
		\sum_{j=1}^N\lt(u_\ep(Y_j)-u_\ep(X_j)\rt) \geq \sum_{j=1}^N(b_j-a_j)-N(K+\kappa)\ep^\alpha \geq  \sum_{j=1}^N(b_j-a_j) 
		-  \ep^\delta\,,
	\end{equation*}
for small enough $\e>0$.
	Next, since $u_\ep$ is 1-Lipschitz and using the properties \eqref{eq:strips} and $X_j\in B_{K\ep^\alpha}((\xi_j, b_j))$, we deduce that
	\begin{align*}
		&|u_\ep(\zeta,0)-u_\ep(Y_N)|+\sum_{j=2}^{N}|u_\ep(X_j)-u_\ep(Y_{j-1})|+|u_\ep(X_1)-u_\ep(0,1)|\\
		&\quad\quad\leq \mathcal L^1\bigg([0,1]\setminus \bigcup_{j=1}^N (a_j,b_j) \bigg) + N\, K\ep^\alpha \leq \e^\delta\,.
	\end{align*}
	Putting the above two estimates into \eqref{eq:u_ep} and using $|u_\ep(0,1)|\leq \ep$, we obtain
	\begin{equation*}
		u_\ep(\zeta,0)\geq 1-c\e^\delta.
	\end{equation*}
	Letting $\e\to 0$ we deduce that the supremum of $u$ on $B_1$ is at least $1$, which forces $u(x)=1-|x|$ in $B_1$. This translates to \eqref{eq:int_B1_m} through $\na u=im$ as desired.
\end{proof}

%
%
%
%

%
%
%
%
%

\bibliographystyle{acm}
\bibliography{ref_besov6}

\end{document}